\newtheorem*{equivariantindextheorem}{Equivariant Index Theorem}
\newtheorem*{basicindextheorem}{Basic Index Theorem}
\theoremstyle{plain}
\newtheorem{theorem}{Theorem}
\newtheorem{lemma}[theorem]{Lemma}
\theoremstyle{definition}
\newtheorem{definition}[theorem]{Definition}
\newtheorem{remark}[theorem]{Remark}
\begin{document}
\title[The equivariant index]{The equivariant index theorem for
transversally elliptic operators and the basic index theorem for Riemannian
foliations}
\author[J.~Br\"{u}ning]{Jochen Br\"{u}ning}
\address{Institut f\"{u}r Mathematik \\
Humboldt Universit\"{a}t zu Berlin \\
Unter den Linden 6 \\
D-10099 Berlin, Germany}
\email[J.~Br\"{u}ning]{bruening@mathematik.hu-berlin.de}
\author[F. W.~Kamber]{Franz W.~Kamber}
\address{Department of Mathematics, University of Illinois \\
1409 W. Green Street \\
Urbana, IL 61801, USA}
\email[F. W.~Kamber]{kamber@math.uiuc.edu}
\author[K.~Richardson]{Ken Richardson}
\address{Department of Mathematics \\
Texas Christian University \\
Fort Worth, Texas 76129, USA}
\email[K.~Richardson]{k.richardson@tcu.edu}
\subjclass[2000]{{58J20; 53C12; 58J28; 57S15; 54H15}}
\keywords{equivariant, index, transversally elliptic, eta invariant,
stratification, foliation}
\thanks{Work of the first author was partly supported by the grant SFB647 ``Space-Time-Matter''.}
\date{\today }

\begin{abstract}
In this expository paper, we explain a formula for the multiplicities of the
index of an equivariant transversally elliptic operator on a $G$-manifold.
The formula is a sum of integrals over blowups of the strata of the group
action and also involves eta invariants of associated elliptic operators.
Among the applications is an index formula for basic Dirac operators on
Riemannian foliations, a problem that was open for many years.
\end{abstract}

\maketitle

\subsection*{1. Introduction}

%\section{Introduction}

In this note we announce two new results in index theory, namely an
equivariant index theorem for transversally elliptic operators relative to a
compact Lie group action and the basic index theorem for transversal Dirac
operators in Riemannian foliations. The latter has been a well--known open
problem in foliation theory for more than twenty years. Complete proofs of
these results appear in \cite{BKR1} and \cite{BKR2}.

Suppose that a compact Lie group $G$ acts by isometries on a compact,
connected Riemannian manifold $M$, and let $E=E^{+}\oplus E^{-}$ be a
graded, $G$-equivariant Hermitian vector bundle over $M$. We consider a
first order $G$-equivariant differential operator $D=D^{+}:$ $\Gamma \left(
M,E^{+}\right) \rightarrow \Gamma \left( M,E^{-}\right) $ that is
transversally elliptic, and let $D^{-}$ be the formal adjoint of $D^{+}$.

The group $G$ acts on $\Gamma \left( M,E^{\pm }\right) $ by $\left(
gs\right) \left( x\right) =g\cdot s\left( g^{-1}x\right) $, and the
(possibly infinite-dimensional) subspaces $\ker \left( D^{+}\right) $ and $%
\ker \left( D^{-}\right) $ are $G$-invariant subspaces. Let $\rho
:G\rightarrow U\left( V_{\rho }\right) $ be an irreducible unitary
representation of $G$, and let $\chi _{\rho }=\mathrm{tr}\left( \rho \right) 
$ denote its character. Let $\Gamma \left( M,E^{\pm }\right) ^{\rho }$ be
the subspace of sections that is the direct sum of the irreducible $G$%
-representation subspaces of $\Gamma \left( M,E^{\pm }\right) $ that are
unitarily equivalent to the representation $\rho $. It can be shown that the
extended operators 
\begin{equation*}
\overline{D}_{\rho ,s}:H^{s}\left( \Gamma \left( M,E^{+}\right) ^{\rho
}\right) \rightarrow H^{s-1}\left( \Gamma \left( M,E^{-}\right) ^{\rho
}\right)
\end{equation*}%
are Fredholm and independent of $s$, so that each irreducible representation
of $G$ appears with finite multiplicity in $\ker D^{\pm }$. Let $a_{\rho
}^{\pm }\in \mathbb{Z}_{\geq 0}$ be the multiplicity of $\rho $ in $\ker
\left( D^{\pm }\right) $.

The study of index theory for such transversally elliptic operators was
initiated by M. Atiyah and I. Singer in the early 1970s (\cite{A}). The
virtual representation-valued index of $D$ is given by 
\begin{equation*}
\mathrm{ind}^{G}\left( D\right) :=\sum_{\rho }\left( a_{\rho }^{+}-a_{\rho
}^{-}\right) \left[ \rho \right] ,
\end{equation*}%
where $\left[ \rho \right] $ denotes the equivalence class of the
irreducible representation $\rho $. The index multiplicity is 
\begin{equation*}
\mathrm{ind}^{\rho }\left( D\right) :=a_{\rho }^{+}-a_{\rho }^{-}=\frac{1}{%
\dim V_{\rho }}\mathrm{ind}\left( \left. D\right\vert _{\Gamma \left(
M,E^{+}\right) ^{\rho }\rightarrow \Gamma \left( M,E^{-}\right) ^{\rho
}}\right) .
\end{equation*}%
In particular, if $\rho _{0}$ is the trivial representation of $G$, then 
\begin{equation*}
\mathrm{ind}^{\rho _{0}}\left( D\right) =\mathrm{ind}\left( \left.
D\right\vert _{\Gamma \left( M,E^{+}\right) ^{G}\rightarrow \Gamma \left(
M,E^{-}\right) ^{G}}\right) ,
\end{equation*}%
where the superscript $G$ implies restriction to $G$-invariant sections.

Atiyah's distributional index can be expanded in terms of the index
multiplicities, which play the role of generalized Fourier coefficients 
\begin{equation*}
\mathrm{ind}_{\ast }\left( D\right) \left( \phi \right) =\sum_{\rho }\mathrm{%
ind}^{\rho }\left( D\right) \int_{G}\phi \left( g\right) ~\overline{\chi
_{\rho }\left( g\right) }~dg\ .
\end{equation*}%
From this formula, we see that the multiplicities determine the
distributional index. Conversely, let $\alpha :G\rightarrow U\left(
V_{\alpha }\right) $ be an irreducible unitary representation. Then 
\begin{equation*}
\mathrm{ind}_{\ast }\left( D\right) \left( \chi _{\alpha }\right)
=\sum_{\rho }\mathrm{ind}^{\rho }\left( D\right) \int_{G}\chi _{\alpha
}\left( g\right) \overline{\chi _{\rho }\left( g\right) }\,dg=\mathrm{ind}%
^{\alpha }D,
\end{equation*}%
so that in principle complete knowledge of the distributional index is
equivalent to knowing all of the multiplicities $\mathrm{ind}^{\rho }\left(
D\right) $. Because the operator $\left. D\right\vert _{\Gamma \left(
M,E^{+}\right) ^{\rho }\rightarrow \Gamma \left( M,E^{-}\right) ^{\rho }}$
is Fredholm, all the indices $\mathrm{ind}^{\rho }\left( D\right) $ depend
only on the stable homotopy class of the principal transverse symbol of $D$.

Consider now the heat kernel expression for the index multiplicities. The
usual McKean-Singer argument shows that, in particular, for every $t>0$, the
index $\mathrm{ind}^{\rho }\left( D\right) $ may be expressed as the
following iterated integral: 
\begin{gather}
\mathrm{ind}^{\rho }\left( D\right) =\int_{x\in M}\int_{g\in G}\,\mathrm{str~%
}g\cdot K\left( t,g^{-1}x,x\right) ~\overline{\chi _{\rho }\left( g\right) }%
~dg~\left\vert dx\right\vert  \notag \\
=\int_{x\in M}\int_{g\in G}\left( \mathrm{tr~}g\cdot K^{+}\left(
t,g^{-1}x,x\right) -\mathrm{tr~}g\cdot K^{-}\left( t,g^{-1}x,x\right)
\right) ~\overline{\chi _{\rho }\left( g\right) }~dg~\left\vert dx\right\vert
\label{indexIntegralIntroduction}
\end{gather}%
where $K^{\pm }\left( t,\cdot ,\cdot \right) \in \Gamma \left( M\times
M,E^{\mp }\boxtimes \left( E^{\pm }\right) ^{\ast }\right) $ is the kernel
for $e^{-t\left( D^{\mp }D^{\pm }+C-\lambda _{\rho }\right) }$ on $\Gamma
\left( M,E^{\pm }\right) $, letting $\left\vert dx\right\vert $ denote the
Riemannian density over $M$.

A priori, the integral above is singular near sets of the form 
\begin{equation*}
\bigcup\limits_{G_{x}\in \left[ H\right] }x\times G_{x}\subset M\times G,
\end{equation*}%
where the isotropy subgroup $G_{x}$ is the subgroup of $G$ that fixes $x\in
M $, and $\left[ H\right] $ is a conjugacy class of isotropy subgroups.

Over the last twenty years numerous papers have appeared that express $%
\mathrm{ind}_{\ast }\left( D\right) $ and 
\begin{equation*}
\int_{M}\,\left( \mathrm{tr~}g\cdot K^{+}\left( t,g^{-1}x,x\right) -\mathrm{%
tr~}g\cdot K^{-}\left( t,g^{-1}x,x\right) \right) ~\left\vert dx\right\vert
\end{equation*}%
in terms of topological and geometric quantities, as in the
Atiyah-Segal-Singer index theorem for elliptic operators \cite{ASe} or the
Berline-Vergne Theorem for transversally elliptic operators \cite{Be-V1},%
\cite{Be-V2}. However, until now there has been very little known about the
problem of expressing $\mathrm{ind}^{\rho }\left( D\right) $ in terms of
topological or geometric quantities which are determined at the different
strata of the $G$-manifold $M$. The special case when all of the isotropy
groups are the same dimension was solved by M. Atiyah in \cite{A}, and this
result was utilized by T. Kawasaki to prove the Orbifold Index Theorem (see 
\cite{Kawas2}). Our analysis is new in that the integral over the group in (%
\ref{indexIntegralIntroduction}) is performed first, before integration over
the manifold, and thus the invariants in our index theorem are very
different from those seen in other equivariant index formulas.

Our main theorem (Theorem \ref{MainTheorem}) expresses $\mathrm{ind}^{\rho
}\left( D\right) $ as a sum of integrals over the different strata of the
action of $G$ on $M$, and it involves the eta invariant of associated
equivariant elliptic operators on spheres normal to the strata. The result
is the following.

\begin{equivariantindextheorem}
The equivariant index $\mathrm{ind}^{\rho }\left( D\right)$ is given by the
formula 
\begin{eqnarray*}
\mathrm{ind}^{\rho }\left( D\right) &=&\int_{G\diagdown \widetilde{M_{0}}%
}A_{0}^{\rho }\left( x\right) ~\widetilde{\left\vert dx\right\vert }%
~+\sum_{j=1}^{r}\beta \left( \Sigma _{\alpha _{j}}\right) ~, \\
\beta \left( \Sigma _{\alpha _{j}}\right) &=&\frac{1}{2\dim V_{\rho }}%
\sum_{b\in B}\frac{1}{n_{b}\mathrm{rank~}W^{b}}\left( -\eta \left(
D_{j}^{S+,b}\right) +h\left( D_{j}^{S+,b}\right) \right) \int_{G\diagdown 
\widetilde{\Sigma _{\alpha _{j}}}}A_{j,b}^{\rho }\left( x\right) ~\widetilde{%
\left\vert dx\right\vert }~.
\end{eqnarray*}
\end{equivariantindextheorem}

The notation will be explained later; e.g. the integrands $A_{0}^{\rho
}\left( x\right) $ and $A_{j,b}^{\rho }\left( x\right) $ are the familar
Atiyah-Singer integrands corresponding to local heat kernel supertraces of
induced elliptic operators over closed manifolds. Even in the case when the
operator $D$ is elliptic, this result was not known previously. Further, the
formula above gives a method for computing eta invariants of Dirac-type
operators on quotients of spheres by compact group actions; these have been
computed previously only in some special cases. We emphasize that every part
of the formula is explicitly computable from local information provided by
the operator and manifold. Even the eta invariant of the operator $%
D_{j}^{S+,b}$ on a sphere is calculated directly from the principal symbol
of the operator $D$ at one point of a singular stratum. Examples show that
all of the terms in the formula above are nontrivial.

The de Rham operator provides an important example illustrating the
computability of the formula, yielding a new theorem expressing the
equivariant Euler characteristic in terms of ordinary Euler characteristics
of the strata of the group action (Theorem \ref{EulerCharacteristicTheorem}).

One of the primary motivations for obtaining an explicit formula for $%
\mathrm{ind}^{\rho }\left( D\right) $ was to use it to produce a basic index
theorem for Riemannian foliations, thereby solving a problem that has been
open since the 1980s. In fact the basic index theorem is a consequence of
the invariant index theorem corresponding to the trivial representation $%
\rho _{0}$. This theorem is stated below, with more details in Section 6. 
%\ref{BasicIndexSubsection}.

\begin{basicindextheorem}
The basic index is given by the formula 
\begin{eqnarray*}
\mathrm{ind}_{b}\left( D_{b}^{E}\right) &=&\int_{\widetilde{M_{0}}\diagup 
\overline{\mathcal{F}}}A_{0,b}\left( x\right) ~\widetilde{\left\vert
dx\right\vert }+\sum_{j=1}^{r}\beta \left( M_{j}\right) ~ \\
\beta \left( M_{j}\right) &=&\frac{1}{2}\sum_{\tau }\frac{1}{n_{\tau }%
\mathrm{rank~}W^{\tau }}\left( -\eta \left( D_{j}^{S+,\tau }\right) +h\left(
D_{j}^{S+,\tau }\right) \right) \int_{\widetilde{M_{j}}\diagup \overline{%
\mathcal{F}}}A_{j,b}^{\tau }\left( x\right) ~\widetilde{\left\vert
dx\right\vert },
\end{eqnarray*}%
where the sum is over all components of singular strata and over all
canonical isotropy bundles $W^{\tau }$, only a finite number of which yield
nonzero terms $A_{j,b}^{\tau }$.
\end{basicindextheorem}

Several techniques in this paper are new and have not been previously
explored. First, the fact that $\mathrm{ind}^{\rho }\left( D\right) $ is
invariant under $G$-equivariant homotopies is used in a very specific way,
and we keep track of the effects of these homotopies so that the formula for
the index reflects data coming from the original operator and manifold. In
Section 4 %\ref{BlowupDoubleMainSection} 
we describe a process of blowing up, cutting, and reassembling the $G$%
-manifold into what is called the desingularization, which also involves
modifying the operator and vector bundles near the singular strata as well.
The result is a $G$-manifold that has less intricate structure and for which
the heat kernels are easier to evaluate. The key idea is to relate the local
asymptotics of the equivariant heat kernel of the original manifold to the
desingularized manifold; at this stage the eta invariant appears through a
direct calculation on the normal bundle to the singular stratum. More
precisely, we compute the local contribution of the supertrace of a general
constant coefficient equivariant heat operator in the neighborhood of a
singular point of an orthogonal group action on a sphere. It is here that
the equivariant index is related to a boundary value problem, which explains
the presence of eta invariants in the main theorem.

Another new idea in this paper is the decomposition of equivariant vector
bundles over $G$-manifolds with one orbit type. A crucial step in the proof
required the construction of a subbundle of an equivariant bundle over a $G$%
-invariant part of a stratum that is the minimal $G$-bundle decomposition
that consists of direct sums of isotypical components of the bundle. We call
this decomposition the \emph{fine decomposition} and define it in Section 2. 
%\ref{FineDecompSection}. 
A more detailed account of this method will appear in \cite{KRi}.

The relevant properties of the supertrace of the equivariant heat kernel are
discussed in Section 3. %\ref{heatKernelEquivIndexSection}. 
We apply the heat kernel analysis, representation theory, and fine
decomposition to produce a heat kernel splitting formula. This process leads
to a reduction formula for the equivariant heat supertrace, from which the
Equivariant Index Theorem \ref{MainTheorem} follows. Examples show that all
the terms in the index formula are in general nontrivial.

We note that a recent paper
of Gorokhovsky and Lott addresses this transverse index question on Riemannian foliations.
Using a different technique, they are able to prove a formula for the basic index of a basic
Dirac operator that is distinct from our formula, in the case where all the infinitesimal
holonomy groups of the foliation are connected tori and if Molino's commuting sheaf is
abelian and has trivial holonomy (see \cite{GLott}).

We thank James Glazebrook, Efton Park and Igor Prokhorenkov for helpful
discussions. The authors would like to thank variously the Mathematisches
Forschungsinstitut Oberwolfach, the Erwin Schr\"{o}dinger International
Institute for Mathematical Physics (ESI), Vienna, the Department for
Mathematical Sciences (IMF) at Aarhus University, the Centre de Recerca Matem%
\`{a}tica (CRM), Barcelona, and the Department of Mathematics at TCU for
hospitality and support during the preparation of this work.

\subsection*{2. The refined isotypical decomposition}

%\section{The refined isotypical decomposition\label{FineDecompSection}}

Let $X$ be a smooth Riemannian manifold on which a compact Lie group $G$
acts by isometries with single orbit type $\left[ H\right] $ (see Section
4). Let $X^{H}$ be the fixed point set of $H$, and for $\alpha \in \pi
_{0}\left( X^{H}\right) $, let $X_{\alpha }^{H}$ denote the corresponding
connected component of $X^{H}$. Let $N=N\left( H\right) $ be the normalizer.

\begin{definition}
\label{componentRelGDefn}We denote $X_{\alpha }=GX_{\alpha }^{H}$, and $%
X_{\alpha }$ is called a \textbf{component of} $X$ \textbf{relative to} $G$.
\end{definition}

\begin{remark}
The space $X_{\alpha }$ is not necessarily connected, but it is the inverse
image of a connected component of $G\diagdown X=N\diagdown X^{H}$ under the
projection $X\rightarrow G\diagdown X$. Also, note that $X_{\alpha
}=X_{\beta }$ if there exists $n\in N$ such that $nX_{\alpha }^{H}=X_{\beta
}^{H}$. If $X$ is a closed manifold, then there are a finite number of
components of $X$ relative to $G$.
\end{remark}

We now introduce a decomposition of a $G$-bundle $E\rightarrow X$ over a $G$%
-space with single orbit type $\left[ H\right] $. Let $E_{\alpha }$ be the
restriction $\left. E\right\vert _{X_{\alpha }^{H}}$. For $\sigma
:H\rightarrow U\left( W_{\sigma }\right) $ an irreducible unitary
representation, let $\sigma ^{n}:H\rightarrow U\left( W_{\sigma }\right) $
be the irreducible representation defined by%
\begin{equation*}
\sigma ^{n}\left( h\right) =\sigma \left( n^{-1}hn\right) .
\end{equation*}%
We let $\left. E_{\alpha }^{\left[ \sigma \right] }\right\vert _{X_{\alpha
}^{H}}$ denote the $\left[ \sigma \right] $-isotypical component of $E$ over 
$X_{\alpha }^{H}$, meaning that each $E_{\alpha ,x}^{\left[ \sigma \right] }$
is the subspace of $E_{x}$ that is a direct sum of irreducible $H$%
-representation subspaces of type $\left[ \sigma \right] $. We define 
\begin{equation*}
E_{\alpha ,\left[ \sigma \right] ,x}^{N}=\mathrm{span}\left\{ E_{\alpha ,x}^{%
\left[ \sigma ^{n}\right] }:n\in N\text{ and }nX_{\alpha }^{H}=X_{\alpha
}^{H}\right\} .
\end{equation*}
The $N$-orbits yield an $N$-bundle $E_{\alpha ,\left[ \sigma \right] }^{N}$
over $NX_{\alpha }^{H}\subseteq X^{H}$, and a similar bundle may be formed
over each distinct $NX_{\beta }^{H}$, with $\beta \in \pi _{0}\left(
X^{H}\right) $. Further, observe that since each bundle $E_{\alpha ,\left[
\sigma \right] }^{N}$ is an $N$-bundle over $NX_{\alpha }^{H}$, it defines a
unique $G$ bundle $E_{\alpha ,\left[ \sigma \right] }^{G}$.

\begin{definition}
\label{fineComponentDefinition}The $G$-bundle $E_{\alpha ,\left[ \sigma %
\right] }^{G}$ over the submanifold $X_{\alpha }$ is called a \textbf{fine
component} or the \textbf{fine component of }$E\rightarrow X$ \textbf{%
associated to }$\left( \alpha ,\left[ \sigma \right] \right) $.
\end{definition}

If $G\diagdown X$ is not connected, one must construct the fine components
separately over each $X_{\alpha }$. If $E$ has finite rank, then $E$ may be
decomposed as a direct sum of distinct fine components over each $X_{\alpha
} $. In any case, $E_{\alpha ,\left[ \sigma \right] }^{N}$ is a finite
direct sum of isotypical components over each $X_{\alpha }^{H}$.

\begin{definition}
\label{FineDecompositionDefinition}The direct sum decomposition of $\left.
E\right\vert _{X_{\alpha }}$ into subbundles $E^{b}$ that are fine
components $E_{\alpha ,\left[ \sigma \right] }^{G}$ for some $\left[ \sigma %
\right] $, written 
\begin{equation*}
\left. E\right\vert _{X_{\alpha }}=\bigoplus\limits_{b}E^{b}~,
\end{equation*}%
is called the \textbf{refined isotypical decomposition} (or \textbf{fine
decomposition}) of $\left. E\right\vert _{X_{\alpha }}$.
\end{definition}

In the case where $G\diagdown X$ is connected, the group $\pi _{0}\left(
N\diagup H\right) $ acts transitively on the connected components $\pi
_{0}\left( X^{H}\right) $, and thus $X_{\alpha }=X$. We comment that if $%
\left[ \sigma ,W_{\sigma }\right] $ is an irreducible $H$-representation
present in $E_{x}$ with $x\in X_{\alpha }^{H}$, then $E_{x}^{\left[ \sigma %
\right] }$ is a subspace of a distinct $E_{x}^{b}$ for some $b$. The
subspace $E_{x}^{b}$ also contains $E_{x}^{\left[ \sigma ^{n}\right] }$ for
every $n$ such that $nX_{\alpha }^{H}=X_{\alpha }^{H}$~.

\begin{remark}
\label{constantMultiplicityRemark}Observe that by construction, for $x\in
X_{\alpha }^{H}$ the multiplicity and dimension of each $\left[ \sigma %
\right] $ present in a specific $E_{x}^{b}$ is independent of $\left[ \sigma %
\right] $. Thus, $E_{x}^{\left[ \sigma ^{n}\right] }$ and $E_{x}^{\left[
\sigma \right] }$ have the same multiplicity and dimension if $nX_{\alpha
}^{H}=X_{\alpha }^{H}$~.
\end{remark}

\begin{remark}
The advantage of this decomposition over the isotypical decomposition is
that each $E^{b}$ is a $G$-bundle defined over all of $X_{\alpha }$, and the
isotypical decomposition may only be defined over $X_{\alpha }^{H}$.
\end{remark}

\begin{definition}
\label{adaptedDefn}Now, let $E$ be a $G$-equivariant vector bundle over $X$,
and let $E^{b}~$be a fine component as in Definition \ref%
{fineComponentDefinition} corresponding to a specific component $X_{\alpha
}=GX_{\alpha }^{H}$ of $X$ relative to $G$. Suppose that another $G$-bundle $%
W$ over $X_{\alpha }$ has finite rank and has the property that the
equivalence classes of $G_{y}$-representations present in $E_{y}^{b},y\in
X_{\alpha }$ exactly coincide with the equivalence classes of $G_{y}$%
-representations present in $W_{y}$, and that $W$ has a single component in
the fine decomposition. Then we say that $W$ is \textbf{adapted} to $E^{b}$.
\end{definition}

\begin{lemma}
\label{AdaptedToAnyBundleLemma}In the definition above, if another $G$%
-bundle $W$ over $X_{\alpha }$ has finite rank and has the property that the
equivalence classes of $G_{y}$-representations present in $E_{y}^{b},y\in
X_{\alpha }$ exactly coincide with the equivalence classes of $G_{y}$%
-representations present in $W_{y}$, then it follows that $W$ has a single
component in the fine decomposition and hence is adapted to $E^{b}$. Thus,
the last phrase in the corresponding sentence in the above definition is
superfluous.
\end{lemma}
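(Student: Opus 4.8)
The plan is to unwind Definitions~\ref{fineComponentDefinition}--\ref{adaptedDefn} and reduce the whole assertion to a statement about which irreducible $H$-representations occur in fibers over $X_{\alpha}^{H}$. Since $X_{\alpha}=GX_{\alpha}^{H}$ and both $E^{b}$ and $W$ are $G$-equivariant over $X_{\alpha}$, it suffices to establish the conclusion along $X_{\alpha}^{H}$: for $x\in X_{\alpha}^{H}$ the single--orbit--type hypothesis forces $G_{x}=H$ (as $H\subseteq G_{x}$ with $G_{x}$ conjugate to $H$ forces $G_{x}=H$), and translation by $g\in G$ identifies the $gHg^{-1}$-isotypical data of $E_{gx}^{b}$ and $W_{gx}$ with the $H$-isotypical data of $E_{x}^{b}$ and $W_{x}$. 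Thus the hypothesis of the lemma is equivalent to: for every $x\in X_{\alpha}^{H}$, the set $\mathcal{I}_{W}(x)$ of equivalence classes of irreducible $H$-representations occurring in $W_{x}$ coincides with the analogous set $\mathcal{I}_{E^{b}}(x)$ for $E_{x}^{b}$.

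Next I would identify $\mathcal{I}_{E^{b}}(x)$ explicitly. Because $E^{b}$ is a fine component, $E^{b}=E_{\alpha,[\sigma_{0}]}^{G}$ for some irreducible $H$-representation $[\sigma_{0}]$, so over $X_{\alpha}^{H}$ its fiber is $E_{x}^{b}=\mathrm{span}\{E_{x}^{[\sigma_{0}^{n}]}:n\in N_{\alpha}\}$, where $N_{\alpha}:=\{n\in N:nX_{\alpha}^{H}=X_{\alpha}^{H}\}$. By Remark~\ref{constantMultiplicityRemark}, $\dim E_{x}^{[\sigma_{0}^{n}]}=\dim E_{x}^{[\sigma_{0}]}>0$ for every $n\in N_{\alpha}$, and since the dimensions of isotypical components are locally constant they are constant on the connected set $X_{\alpha}^{H}$. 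Hence $\mathcal{I}_{E^{b}}(x)$ is exactly the orbit $\mathcal{O}:=\{[\sigma_{0}^{n}]:n\in N_{\alpha}\}$ of $[\sigma_{0}]$ under the twisting action $\sigma\mapsto\sigma^{n}$, independently of $x\in X_{\alpha}^{H}$. Combining with the first step, $\mathcal{I}_{W}(x)=\mathcal{O}$ for all $x\in X_{\alpha}^{H}$.

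Finally I would feed this back into the construction of the refined isotypical decomposition of $W|_{X_{\alpha}}$ (Definition~\ref{FineDecompositionDefinition}). Over $X_{\alpha}^{H}$ the fiber of the fine component of $W$ associated to $(\alpha,[\sigma_{0}])$ is $\mathrm{span}\{W_{x}^{[\sigma_{0}^{n}]}:n\in N_{\alpha}\}=\bigoplus_{[\tau]\in\mathcal{O}}W_{x}^{[\tau]}$; since $W$ has finite rank and $\mathcal{I}_{W}(x)=\mathcal{O}$, this is all of $W_{x}$. Therefore $W_{\alpha,[\sigma_{0}]}^{N}$ agrees with $W$ over $X_{\alpha}^{H}$, hence over $NX_{\alpha}^{H}$ as $N$-bundles, and its associated $G$-bundle satisfies $W_{\alpha,[\sigma_{0}]}^{G}=W|_{X_{\alpha}}$. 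Thus the fine decomposition of $W|_{X_{\alpha}}$ consists of the single term $W_{\alpha,[\sigma_{0}]}^{G}$, so $W$ has a single component in the fine decomposition, and by Definition~\ref{adaptedDefn} it is adapted to $E^{b}$; this is exactly the assertion that the last clause of that definition is superfluous.

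The step I expect to need the most care is the middle one: verifying that $\mathcal{I}_{E^{b}}(x)$ is a \emph{full} $N_{\alpha}$-twist orbit and is independent of $x\in X_{\alpha}^{H}$. This is precisely the point at which one uses that $E^{b}$ is a genuine fine component and not merely an arbitrary $G$-subbundle of $E$ --- it rests on Remark~\ref{constantMultiplicityRemark} together with the local constancy of isotypical dimensions on the connected component $X_{\alpha}^{H}$. One must also keep in mind that $W$ is given only over $X_{\alpha}$ with no relation to $E$ beyond the coincidence of representation types, so that every structural statement about $W$ --- in particular that its fine decomposition has exactly one term --- has to be deduced from the identity $\mathcal{I}_{W}(x)=\mathcal{O}$ alone.
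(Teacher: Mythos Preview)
Your proposal is correct and follows essentially the same route as the paper's proof: reduce to a fiber over $X_{\alpha}^{H}$, use that the irreducible $H$-types appearing in $E_{x}^{b}$ form a single $N_{\alpha}$-twist orbit, transfer this via the hypothesis to $W_{x}$, and conclude that the fine component of $W$ associated to any such type already exhausts $W$. The paper's argument is considerably terser---it picks $[\sigma]$ and $[\sigma']$ in $W_{x}$ and directly asserts $[\sigma']=[\sigma^{n}]$ for some $n\in N_{\alpha}$ ``by hypothesis''---whereas you make the implicit step explicit by invoking Remark~\ref{constantMultiplicityRemark} and local constancy to show that $\mathcal{I}_{E^{b}}(x)$ is the full orbit $\mathcal{O}$; this is exactly the care you anticipated needing, and it is the right justification.
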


\begin{proof}
Suppose that we choose an equivalence class $\left[ \sigma \right] $ of $H$%
-representations present in $W_{x}$, $x\in X_{\alpha }^{H}$. Let $\left[
\sigma ^{\prime }\right] $ be any other equivalence class; then, by
hypothesis, there exists $n\in N$ such that $nX_{\alpha }^{H}=X_{\alpha
}^{H} $ and $\left[ \sigma ^{\prime }\right] =\left[ \sigma ^{n}\right] $.
Then, observe that $nW_{x}^{\left[ \sigma \right] }=W_{nx}^{\left[ \sigma
^{n}\right] }=W_{x}^{\left[ \sigma ^{n}\right] }$, with the last equality
coming from the rigidity of irreducible $H$-representations. Thus, $W$ is
contained in a single fine component, and so it must have a single component
in the fine decomposition.
\end{proof}

In what follows, we show that there are naturally defined finite-dimensional
vector bundles that are adapted to any fine components. %Once and for all, 
We enumerate the irreducible representations $\left\{ \left[ \rho
_{j},V_{\rho _{j}}\right] \right\} _{j=1,2,...}$ of $G$. Let $\left[ \sigma
,W_{\sigma }\right] $ be any irreducible $H$-representation. Let $G\times
_{H}W_{\sigma } $ be the corresponding homogeneous vector bundle over the
homogeneous space $G\diagup H$. Then the $L^{2}$-sections of this vector
bundle decompose into irreducible $G$-representations. In particular, let $%
\left[ \rho _{j_{0}},V_{\rho _{j_{0}}}\right] $ be the equivalence class of
irreducible representations that is present in $L^{2}\left( G\diagup
H,G\times _{H}W_{\sigma }\right) $ and that has the lowest index $j_{0}$.
Then Frobenius reciprocity implies%
\begin{equation*}
0\neq \mathrm{Hom}_{G}\left( V_{\rho _{j_{0}}},L^{2}\left( G\diagup
H,G\times _{H}W_{\sigma }\right) \right) \cong \mathrm{Hom}_{H}\left( V_{%
\mathrm{\mathrm{Res}}\left( \rho _{j_{0}}\right) },W_{\sigma }\right) ,
\end{equation*}%
so that the restriction of $\rho _{j_{0}}$ to $H$ contains the $H$%
-representation $\left[ \sigma \right] $. Now, for a component $X_{\alpha
}^{H}$ of $X^{H}$, with $X_{\alpha }=GX_{\alpha }^{H}$ its component in $X$
relative to $G$, the trivial bundle%
\begin{equation*}
X_{\alpha }\times V_{\rho _{j_{0}}}
\end{equation*}%
is a $G$-bundle (with diagonal action) that contains a nontrivial fine
component $W_{\alpha ,\left[ \sigma \right] }$ containing $X_{\alpha
}^{H}\times \left( V_{\rho _{j_{0}}}\right) ^{\left[ \sigma \right] }$.

\begin{definition}
\label{canonicalIsotropyBundleDefinition}We call $W_{\alpha ,\left[ \sigma %
\right] }\rightarrow X_{\alpha }$ the \textbf{canonical isotropy }$G$\textbf{%
-bundle associated to }$\left( \alpha ,\left[ \sigma \right] \right) \in \pi
_{0}\left( X^{H}\right) \times \widehat{H}$. Observe that $W_{\alpha ,\left[
\sigma \right] }$ depends only on the enumeration of irreducible
representations of $G$, the irreducible $H$-representation $\left[ \sigma %
\right] $ and the component $X_{\alpha }^{H}$. 
\end{definition}

\begin{lemma}
\label{canIsotropyGbundleAdaptedExists}Given any $G$-bundle $E\rightarrow X$
and any fine component $E^{b}$ of $E$ over some $X_{\alpha }=GX_{\alpha
}^{H} $, there exists a canonical isotropy $G$-bundle $W_{\alpha ,\left[
\sigma \right] }$ adapted to $E^{b}\rightarrow X_{\alpha }$.
\end{lemma}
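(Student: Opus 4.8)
The plan is to reduce the claim to the combination of Lemma~\ref{AdaptedToAnyBundleLemma} with the construction of the canonical isotropy bundles preceding Definition~\ref{canonicalIsotropyBundleDefinition}. By Lemma~\ref{AdaptedToAnyBundleLemma}, it suffices to produce a finite-rank $G$-bundle $W$ over $X_{\alpha}$ whose fiberwise $G_{y}$-representation content matches that of $E_{y}^{b}$ for every $y \in X_{\alpha}$; the single-fine-component property then comes for free. Since every fiber of $E^{b}$ is built, by the definition of the fine decomposition, from $H$-isotypical pieces of a single $N$-orbit $\left\{\left[\sigma^{n}\right] : n \in N,\ nX_{\alpha}^{H}=X_{\alpha}^{H}\right\}$, I first fix one representative $\left[\sigma,W_{\sigma}\right]$ of the $H$-representations actually occurring in $E_{x}^{b}$ at a point $x \in X_{\alpha}^{H}$.

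Next I would invoke the Frobenius-reciprocity argument already displayed in the excerpt: enumerate $\widehat{G} = \left\{\left[\rho_{j},V_{\rho_{j}}\right]\right\}$, and let $\left[\rho_{j_{0}},V_{\rho_{j_{0}}}\right]$ be the representation of least index occurring in $L^{2}\left(G/H,\, G\times_{H}W_{\sigma}\right)$. Frobenius reciprocity gives $\operatorname{Hom}_{H}\left(V_{\operatorname{Res}(\rho_{j_{0}})}, W_{\sigma}\right) \neq 0$, so $\left(V_{\rho_{j_{0}}}\right)^{\left[\sigma\right]} \neq 0$. Hence the trivial $G$-bundle $X_{\alpha} \times V_{\rho_{j_{0}}}$ (diagonal action) has a nonzero fine component $W_{\alpha,\left[\sigma\right]}$ whose fiber over $x$ contains $\left(V_{\rho_{j_{0}}}\right)^{\left[\sigma\right]}$, which is exactly the canonical isotropy bundle of Definition~\ref{canonicalIsotropyBundleDefinition}. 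This $W_{\alpha,\left[\sigma\right]}$ is a fine component by construction, so it has a single component in its fine decomposition.

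It then remains to check that $W_{\alpha,\left[\sigma\right]}$ is adapted to $E^{b}$ in the sense of Definition~\ref{adaptedDefn}, i.e. that at every $y \in X_{\alpha}$ the equivalence classes of $G_{y}$-representations appearing in $\left(W_{\alpha,\left[\sigma\right]}\right)_{y}$ coincide with those appearing in $E_{y}^{b}$. By $G$-equivariance both sides are constant along $G$-orbits, so it is enough to verify the matching at points $y \in X_{\alpha}^{H}$, where $G_{y}$ is conjugate to $H$ and the relevant data is the set of $H$-isotypical classes. Over such a point, $\left(W_{\alpha,\left[\sigma\right]}\right)_{x}$ consists precisely of the $N$-orbit $\left\{\left[\sigma^{n}\right]\right\}$ of $\left[\sigma\right]$ — this is the defining property of a fine component, combined with Remark~\ref{constantMultiplicityRemark} which guarantees the content is exactly the $N$-orbit and nothing more — and $E_{x}^{b}$, being a fine component, consists of the same $N$-orbit by the choice of $\left[\sigma\right]$ as one of its constituents and the observation in the paragraph after Definition~\ref{FineDecompositionDefinition}. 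Therefore the two $H$-isotypical supports agree at $x$, hence the $G_{y}$-supports agree at every $y$, and Lemma~\ref{AdaptedToAnyBundleLemma} finishes the proof.

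The main obstacle I anticipate is the bookkeeping in the last step: one must be careful that "equivalence classes present" is genuinely an $N$-orbit-closed condition and that passing from the point $x \in X_{\alpha}^{H}$ to a general $y \in X_{\alpha}$ (where the isotropy is only conjugate to $H$, not equal) does not introduce or lose classes. This is precisely where the definition of the fine component via the $N$-span and the $G$-bundle it induces does the work, but it should be stated explicitly rather than left implicit; no deep idea is needed beyond tracking representations along orbits.
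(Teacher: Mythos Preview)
The paper does not include a proof of this lemma; it is stated without argument, with full details deferred to the companion paper \cite{KRi}. Your proof is correct and is precisely the natural argument suggested by the constructions in Section~2: pick any $[\sigma]$ occurring in $E_{x}^{b}$, form the canonical isotropy bundle $W_{\alpha,[\sigma]}$, and use Remark~\ref{constantMultiplicityRemark} to see that both $E^{b}$ and $W_{\alpha,[\sigma]}$, being fine components associated to $(\alpha,[\sigma])$, carry exactly the $H$-types $\{[\sigma^{n}]:n\in N,\ nX_{\alpha}^{H}=X_{\alpha}^{H}\}$ at every point of $X_{\alpha}^{H}$; $G$-equivariance then propagates this to all of $X_{\alpha}$, and Lemma~\ref{AdaptedToAnyBundleLemma} (or directly Definition~\ref{adaptedDefn}) finishes.

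Two small points worth tightening. First, when you write ``the $N$-orbit $\{[\sigma^{n}]\}$,'' you mean the orbit under the subgroup $\{n\in N:nX_{\alpha}^{H}=X_{\alpha}^{H}\}$, not the full $N$-orbit; this is what the definition of the fine component actually uses, and it is the same restricted orbit for both bundles, so the conclusion is unaffected. Second, the reason every $[\sigma^{n}]$ in that restricted orbit is genuinely \emph{present} in $(W_{\alpha,[\sigma]})_{x}$ (and not merely allowed by the span in the definition) is that $V_{\rho_{j_{0}}}$ is itself a $G$-module, so $n\cdot (V_{\rho_{j_{0}}})^{[\sigma]}\subseteq (V_{\rho_{j_{0}}})^{[\sigma^{n}]}\neq 0$; you cite Remark~\ref{constantMultiplicityRemark} for this, which is legitimate since that remark applies to any fine component, but making the one-line reason explicit would remove any doubt.
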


\subsection*{3. Equivariant heat kernel and equivariant index}

%\section{Properties of the Equivariant Heat Kernel and Equivariant Index 
%\label{heatKernelEquivIndexSection}\label{equivariant index section}}

\medskip We now review some properties of the equivariant index and
equivariant heat kernel that are known to experts in the field (see \cite{A}%
, \cite{BrH1}, \cite{BrH2}, \cite{Be-G-V}). With notation as in the
introduction, let $E=E^{+}\oplus E^{-}$ be a graded, $G$-equivariant vector
bundle over $M$. We consider a first order $G$-equivariant differential
operator $D^{+}:$ $\Gamma \left( M,E^{+}\right) \rightarrow \Gamma \left(
M,E^{-}\right) $ which is transversally elliptic, and let $D^{-}$ be the
formal adjoint of $D^{+}$. The restriction $D^{\pm ,\rho }=\left. D^{\pm
}\right\vert _{\Gamma \left( M,E\right) ^{\rho }}$ behaves in a similar way
to an elliptic operator. Let $\left\{ X_{1},...,X_{r}\right\} $ be an
orthonormal basis of the Lie algebra of $G$. Let $\mathcal{L}_{X_{j}}$
denote the induced Lie derivative with respect to $X_{j}$ on sections of $E$%
, and let $C=\sum_{j}\mathcal{L}_{X_{j}}^{\ast }\mathcal{L}_{X_{j}}$ be the
Casimir operator on sections of $E$. Let $\lambda _{\rho }\geq 0$ be the
eigenvalue of $C$ associated to the representation type $\left[ \rho \right] 
$. The following argument can be seen in some form in \cite{A}. Given a
section $\alpha \in \Gamma \left( M,E^{+}\right) ^{\rho }$, we have 
\begin{equation*}
D^{-}D^{+}\alpha =\left( D^{-}D^{+}+C-\lambda _{\rho }\right) \alpha .
\end{equation*}%
Then $D^{-}D^{+}+C-\lambda _{\rho }$ is self-adjoint and elliptic and has
finite dimensional eigenspaces consisting of smooth sections. The $\left[
\rho \right] $-part $K^{\left[ \rho \right] }$ of the heat kernel of $%
e^{-tD^{-}D^{+}}$ is the same as the $\left[ \rho \right] $-part of the heat
kernel $K\left( t,\cdot ,\cdot \right) $ of $e^{-t\left(
D^{-}D^{+}+C-\lambda _{\rho }\right) }$. One can show that 
\begin{equation}
\mathrm{ind}^{\rho }\left( D\right) =\int_{x\in M}\int_{g\in G}\,\mathrm{str~%
}g\cdot K\left( t,g^{-1}x,x\right) ~\overline{\chi _{\rho }\left( g\right) }%
~dg~\left\vert dx\right\vert ,  \label{heatKernelExpressionForIndex}
\end{equation}%
where $\chi _{\rho }\left( g\right) $ Since the heat kernel $K$ changes
smoothly with respect to $G$-equivariant deformations of the metric and of
the operator $D$ and the right hand side is an integer, we see that $\mathrm{%
ind}^{\rho }\left( D\right) $ is stable under such homotopies of the
operator $D^{+}$ through $G$-equivariant transversally elliptic operators.
This implies that the indices $\mathrm{ind}^{G}\left( D\right) $ and $%
\mathrm{ind}_{g}\left( D\right) $ mentioned in the introduction depend only
on the $G$-equivariant homotopy class of the principal transverse symbol of $%
D^{+}$. Since the integral above is independent of $t$, we may compute its
asymptotics as $t\rightarrow 0^{+}$ to determine $\mathrm{ind}^{\rho }\left(
D\right) $.

One important idea is that the asymptotics of $K\left( t,g^{-1}x,x\right) $
as $t\rightarrow 0$ are completely determined by the operator's local
expression along the minimal geodesic connecting $g^{-1}x$ and $x$, if $%
g^{-1}x$ and $x$ are sufficiently close together. If the distance between $%
g^{-1}x$ and $x$ is bounded away from zero, there is a constant $c>0$ such
that $K\left( t,g^{-1}x,x\right) =\mathcal{O}\left( e^{-c/t}\right) $ as $%
t\rightarrow 0$. For these reasons, it is clear that the asymptotics of the
supertrace $\mathrm{ind}^{\rho }\left( D\right) $ are locally determined
over spaces of orbits in $M$, meaning that the contribution to the index is
a sum of $t^{0}$-asymptotics of integrals of $\int_{U}\int_{G}\mathrm{%
\mathrm{str}}\left( g\cdot K\left( t,g^{-1}w,w\right) \right) \,\overline{%
\chi _{\rho }\left( g\right) }~dg$ $\left\vert dw\right\vert $ over a finite
collection of saturated sets $U\subset M$ that are unions of orbits that
intersect a neighborhood of a point of $M$. However, the integral of the $%
t^{0}$ asymptotic coefficient of $\int_{G}\mathrm{\mathrm{str}}\left( g\cdot
K\left( t,g^{-1}w,w\right) \right) \overline{\chi _{\rho }\left( g\right) }%
~dg$ over all of $M$ is not the index, and the integral of the $t^{0}$
asymptotic coefficient of $\int_{M}\mathrm{\mathrm{str}\,}\left( g\cdot
K\left( t,g^{-1}w,w\right) \right) ~\mathrm{dvol}_{M}$ over all of $G$ is
not the index. Thus, the integrals over $M$ and $G$ may not be separated
when computing the local index contributions. In particular, the singular
strata of the group action may not be ignored.

\subsection*{4. Desingularizing along a singular stratum}

%\section{Desingularizing along a singular stratum
%\label{BlowupDoubleMainSection}}

%\label{stratification} 
In the first part of this section, we will describe some standard results
from the theory of Lie group actions (see \cite{Bre}, \cite{Kaw}). Such $G$%
-manifolds are stratified spaces, and the stratification can be described
explicitly. As above, $G$ is a compact Lie group acting on a smooth,
connected, closed manifold $M$. We assume that the action is effective,
meaning that no $g\in G$ fixes all of $M$. (Otherwise, replace $G$ with $%
G\diagup \left\{ g\in G:gx=x\text{ for all }x\in M\right\} $.) Choose a
Riemannian metric for which $G$ acts by isometries; average the pullbacks of
any fixed Riemannian metric over the group of diffeomorphisms to obtain such
a metric.

%Given such an action and 
For $x\in M$, the isotropy or stabilizer subgroup $G_{x}<G$ is defined to be 
$\left\{ g\in G:gx=x\right\} $. The orbit $\mathcal{O}_{x}$ of a point $x$
is defined to be $\left\{ gx:g\in G\right\} $. Since $G_{xg}=gG_{x}g^{-1}$,
the conjugacy class of the isotropy subgroup of a point is fixed along an
orbit.

On any such $G$-manifold, the conjugacy class of the isotropy subgroups
along an orbit is called the \textbf{orbit type}. On any such $G$-manifold,
there are a finite number of orbit types, and there is a partial order on
the set of orbit types. Given subgroups $H$ and $K$ of $G$, we say that $%
\left[ H\right] \leq $ $\left[ K\right] $ if $H$ is conjugate to a subgroup
of $K$, and we say $\left[ H\right] <$ $\left[ K\right] $ if $\left[ H\right]
\leq $ $\left[ K\right] $ and $\left[ H\right] \neq $ $\left[ K\right] $. We
may enumerate the conjugacy classes of isotropy subgroups as $\left[ G_{0}%
\right] ,...,\left[ G_{r}\right] $ such that $\left[ G_{i}\right] \leq \left[
G_{j}\right] $ implies that $i\leq j$. It is well-known that the union of
the principal orbits (those with type $\left[ G_{0}\right] $) form an open
dense subset $M_{0}$ of the manifold $M$, and the other orbits are called 
\textbf{singular}. As a consequence, every isotropy subgroup $H$ satisfies $%
\left[ G_{0}\right] \leq \left[ H\right] $. Let $M_{j}$ denote the set of
points of $M$ of orbit type $\left[ G_{j}\right] $ for each $j$; the set $%
M_{j}$ is called the \textbf{stratum} corresponding to $\left[ G_{j}\right] $%
. If $\left[ G_{j}\right] \leq \left[ G_{k}\right] $, it follows that the
closure of $M_{j}$ contains the closure of $M_{k}$. A stratum $M_{j}$ is
called a \textbf{minimal stratum} if there does not exist a stratum $M_{k}$
such that $\left[ G_{j}\right] <\left[ G_{k}\right] $ (equivalently, such
that $\overline{M_{k}}\subsetneq \overline{M_{j}}$). It is known that each
stratum is a $G$-invariant submanifold of $M$, and in fact a minimal stratum
is a closed (but not necessarily connected) submanifold. Also, for each $j$,
the submanifold $M_{\geq j}:=\bigcup\limits_{\left[ G_{k}\right] \geq \left[
G_{j}\right] }M_{k}$ is a closed, $G$-invariant submanifold. To prove the
main theorem of this paper, we decompose the $G$-manifold using tubular
neighborhoods of the minimal strata. \label{BlowUpSection}

We will now construct a new $G$-manifold $N$ that has a single stratum (of
type $\left[ G_{0}\right] $) and that is a branched cover of $M$, branched
over the singular strata. A distinguished fundamental domain of $M_{0}$ in $%
N $ is called the \textbf{desingularization of }$M$ and is denoted $%
\widetilde{M}$. We also refer to \cite{AlMel} for their recent related
explanation of this desingularization (which they call \emph{resolution}).
To simplify this discussion, we will assume that the codimension of each
singular stratum is at least two.

A sequence of modifications is used to construct $N$ and $\widetilde{M}%
\subset N$. Let $M_{j}$ be a minimal stratum. Let $T_{\varepsilon }\left(
M_{j}\right) $ denote a tubular neighborhood of radius $\varepsilon $ around 
$M_{j}$, with $\varepsilon $ chosen sufficiently small so that all orbits in 
$T_{\varepsilon }\left( M_{j}\right) \setminus M_{j}$ are of type $\left[
G_{k}\right] $, where $\left[ G_{k}\right] <\left[ G_{j}\right] $. Let 
\begin{equation*}
N^{1}=\left( M\setminus T_{\varepsilon }\left( M_{j}\right) \right) \cup
_{\partial T_{\varepsilon }\left( M_{j}\right) }\left( M\setminus
T_{\varepsilon }\left( M_{j}\right) \right)
\end{equation*}%
be the manifold constructed by gluing two copies of $\left( M\setminus
T_{\varepsilon }\left( M_{j}\right) \right) $ smoothly along the boundary.
Since the $T_{\varepsilon }\left( M_{j}\right) $ is saturated (a union of $G$%
-orbits), the $G$-action lifts to $N^{1}$. Note that the strata of the $G$%
-action on $N^{1}$ correspond to strata in $M\setminus T_{\varepsilon
}\left( M_{j}\right) $. If $M_{k}\cap \left( M\setminus T_{\varepsilon
}\left( M_{j}\right) \right) $ is nontrivial, then the stratum corresponding
to isotropy type $\left[ G_{k}\right] $ on $N^{1}$ is 
\begin{equation*}
N_{k}^{1}=\left( M_{k}\cap \left( M\setminus T_{\varepsilon }\left(
M_{j}\right) \right) \right) \cup _{\left( M_{k}\cap \partial T_{\varepsilon
}\left( M_{j}\right) \right) }\left( M_{k}\cap \left( M\setminus
T_{\varepsilon }\left( M_{j}\right) \right) \right) .
\end{equation*}%
Thus, $N^{1}$ is a $G$-manifold with one fewer stratum than $M$, and $%
M\setminus M_{j}$ is diffeomorphic to one copy of $\left( M\setminus
T_{\varepsilon }\left( M_{j}\right) \right) $, denoted $\widetilde{M}^{1}$
in $N^{1}$. In fact, $N^{1}$ is a branched double cover of $M$, branched
over $M_{j}$. If $N^{1}$ has one orbit type, then we set $N=N^{1}$ and $%
\widetilde{M}=\widetilde{M}^{1}$. If $N^{1}$ has more than one orbit type,
we repeat the process with the $G$-manifold $N^{1}$ to produce a new $G$%
-manifold $N^{2}$ with two fewer orbit types than $M$ and that is a $4$-fold
branched cover of $M$. Again, $\widetilde{M}^{2}$ is a fundamental domain of 
$\widetilde{M}^{1}\setminus \left\{ \text{a minimal stratum}\right\} $,
which is a fundamental domain of $M$ with two strata removed. We continue
until $N=N^{r}$ is a $G$-manifold with all orbits of type $\left[ G_{0}%
\right] $ and is a $2^{r}$-fold branched cover of $M$, branched over $%
M\setminus M_{0}$. We set $\widetilde{M}=\widetilde{M}^{r}$, which is a
fundamental domain of $M_{0}$ in $N$.

Further, one may independently desingularize $M_{\geq j}$, since this
submanifold is itself a closed $G$-manifold. If $M_{\geq j}$ has more than
one connected component, we may desingularize all components simultaneously.
The isotropy type of all points of $\widetilde{M_{\geq j}}$ is $\left[ G_{j}%
\right] $, and $\widetilde{M_{\geq j}}\diagup G$ is a smooth (open) manifold.

We now more precisely describe the desingularization. If $M$ is equipped
with a $G$-equivariant, transversally elliptic differential operator on
sections of an equivariant vector bundle over $M$, then this data may be
pulled back to the desingularization $\widetilde{M}$. Given the bundle and
operator over $N^{j}$, simply form the invertible double of the operator on $%
N^{j+1}$, which is the double of the manifold with boundary $N^{j}\setminus
T_{\varepsilon }\left( \Sigma \right) $, where $\Sigma $ is a minimal
stratum on $N^{j}$.

Specifically, we modify the metric equivariantly so that there exists $%
\varepsilon >0$ such that the tubular neighborhood $B_{2\varepsilon }\Sigma $
of $\Sigma $ in $N^{j}$ is isometric to a ball of radius $2\varepsilon $ in
the normal bundle $N\Sigma $. In polar coordinates, this metric is $%
ds^{2}=dr^{2}+d\sigma ^{2}+r^{2}d\theta _{\sigma }^{2}$, with $r\in \left(
0,2\varepsilon \right) $, $d\sigma ^{2}$ is the metric on $\Sigma $, and $%
d\theta _{\sigma }^{2}$ is the metric on $S\left( N_{\sigma }\Sigma \right) $%
, the unit sphere in $N_{\sigma }\Sigma $; note that $d\theta _{\sigma }^{2}$
is isometric to the Euclidean metric on the unit sphere. We simply choose
the horizontal metric on $B_{2\varepsilon }\Sigma $ to be the pullback of
the metric on the base $\Sigma $, the fiber metric to be Euclidean, and we
require that horizontal and vertical vectors be orthogonal. We do not assume
that the horizontal distribution is integrable.

Next, we replace $r^{2}$ with $f\left( r\right) =\left[ \tilde{g}\left(
r\right) \right] ^{2}$ in the expression for the metric, where $\tilde{g}$
is defined so that the metric is cylindrical for small $r$.

In our description of the modification of the differential operator, we will
need the notation for the (external) product of differential operators.
Suppose that $F\hookrightarrow X\overset{\pi }{\rightarrow }B$ is a fiber
bundle that is locally a metric product. Given an operator $A_{1,x}:\Gamma
\left( \pi ^{-1}\left( x\right) ,E_{1}\right) \rightarrow \Gamma \left( \pi
^{-1}\left( x\right) ,F_{1}\right) $ that is locally given as a differential
operator $A_{1}:\Gamma \left( F,E_{1}\right) \rightarrow \Gamma \left(
F,F_{1}\right) $ and $A_{2}:\Gamma \left( B,E_{2}\right) \rightarrow \Gamma
\left( B,F_{2}\right) $ on Hermitian bundles, we have the product 
\begin{equation*}
A_{1,x}\ast A_{2}:\Gamma \left( X,\left( E_{1}\boxtimes E_{2}\right) \oplus
\left( F_{1}\boxtimes F_{2}\right) \right) \rightarrow \Gamma \left(
X,\left( F_{1}\boxtimes E_{2}\right) \oplus \left( E_{1}\boxtimes
F_{2}\right) \right)
\end{equation*}%
as in K-theory (see, for example, \cite{A}, \cite[pp. 384ff]{LM}), which is
used to define the Thom Isomorphism in vector bundles.

Let $D=D^{+}:\Gamma \left( N^{j},E^{+}\right) \rightarrow \Gamma \left(
N^{j},E^{-}\right) $ be the given first order, transversally elliptic, $G$%
-equivariant differential operator. Let $\Sigma \ $be a minimal stratum of $%
N^{j}$. Here we assume that $\Sigma $ has codimension at least two. We
modify the metrics and bundles equivariantly so that there exists $%
\varepsilon >0$ such that the tubular neighborhood $B_{\varepsilon }\left(
\Sigma \right) $ of $\Sigma $ in $M$ is isometric to a ball of radius $%
\varepsilon $ in the normal bundle $N\Sigma $, and so that the $G$%
-equivariant bundle $E$ over $B_{\varepsilon }\left( \Sigma \right) $ is a
pullback of the bundle $\left. E\right\vert _{\Sigma }\rightarrow \Sigma $.
We assume that near $\Sigma $, after a $G$-equivariant homotopy $D^{+}$ can
be written on $B_{\varepsilon }\left( \Sigma \right) $ locally as the
product 
\begin{equation*}
D^{+}=\left( D_{N}\ast D_{\Sigma }\right) ^{+},
\end{equation*}%
where $D_{\Sigma }\ $is a transversally elliptic, $G$-equivariant, first
order operator on the stratum $\Sigma $, and $D_{N}$ is a $G$-equivariant,
first order operator on $B_{\varepsilon }\left( \Sigma \right) $ that is
elliptic on the fibers. If $r$ is the distance from $\Sigma $, we write $%
D_{N}$ in polar coordinates as%
\begin{equation*}
D_{N}=Z\left( \nabla _{\partial _{r}}^{E}+\frac{1}{r}D^{S}\right)
\end{equation*}%
where $Z=-i\sigma \left( D_{N}\right) \left( \partial _{r}\right) $ is a
local bundle isomorphism and the map $D^{S}$ is a purely first order
operator that differentiates in the unit normal bundle directions tangent to 
$S_{x}\Sigma $.

On first glance, the product form above appears to be restrictive, but as we
show in \cite{BKR1}, the product assumption is satisfied by most operators
under fairly weak topological conditions on the stratum.

We modify the operator $D_{N}$ on each Euclidean fiber of $N\Sigma \overset{%
\pi }{\rightarrow }\Sigma $ by converting the conical metric to a
cylindrical metric via a radial blow-up; the result is a $G$-manifold $%
\widetilde{M}^{j}$ with boundary $\partial \widetilde{M}^{j}$, a $G$-vector
bundle $\widetilde{E}^{j}$, and the induced operator $\widetilde{D}^{j}$,
all of which locally agree with the original counterparts outside $%
B_{\varepsilon }\left( \Sigma \right) $. We may double $\widetilde{M}^{j}$
along the boundary $\partial \widetilde{M}^{j}$ and reverse the chirality of 
$\widetilde{E}^{j}$ as described in \cite[Ch. 9]{Bo-Wo}. Doubling produces a
closed $G$-manifold $N^{j}$, a $G$-vector bundle $E^{j}$, and a first-order
transversally elliptic differential operator $D^{j}$. This process may be
iterated until all orbits of the resulting $G$-manifold are principal.

In the technical proof of the main theorem in \cite{BKR1}, we carefully
track the changes to the heat kernel integral (\ref%
{heatKernelExpressionForIndex}) throughout the desingularization process.
When the radial blowup occurs, the manifold is replaced with a manifold with
boundary and nonlocal boundary conditions; this is the reason that eta
invariants appear in the formula for the equivariant index multiplicities.
In spite of this, every part of the formula is explicitly computable from
the principal transverse symbol of the operator restricted to small
saturated neighborhoods.

The crucial formula is as follows. In calculating the small $t$ asymptotics
of 
\begin{equation*}
\int \mathrm{str}K\left( t,z_{p},z_{p}\right) ^{\rho }=\int \mathrm{\mathrm{%
str}}\left( E_{t}\left( D\right) ^{\rho }\right) \left( z_{p},z_{p}\right)\ ,
\end{equation*}
with $E_{t}\left( D\right) ^{\rho }=\exp \left( -tD^{\ast }D\right) ^{\rho }$%
, it suffices to calculate the right hand side of the formula above over a
small tubular neighborhood $B_{\varepsilon }\left( U\right) \subset M$ of a
saturated open set $U\subset \Sigma _{\alpha }\subseteq \Sigma $ in a most
singular stratum. We then sum over fine components $b\in B$ (see Definition %
\ref{fineComponentDefinition}), using the heat kernel coming from the blown
up manifold $\widetilde{B_{\varepsilon }\left( U\right) }$. As $t\rightarrow
0$, 
\begin{gather}
\int_{B_{\varepsilon }\left( U\right) }\mathrm{str}K\left(
t,z_{p},z_{p}\right) ^{\rho }\sim \int_{\widetilde{B_{\varepsilon }\left(
U\right) }}\mathrm{str}K\left( t,z_{p},z_{p}\right) ^{\rho }  \notag \\
+\sum_{b}\frac{1}{2n_{b}\mathrm{rank}\left( W^{b}\right) }\left( -\eta
\left( D^{S+,b}\right) +h\left( D^{S+,b}\right) \right) \int_{p\in U}\mathrm{%
str}K_{\Sigma }^{b}\left( t,p,p\right) ^{\rho },  \label{heatSplitting}
\end{gather}%
with $\mathrm{str}K_{\Sigma }^{b}\left( t,p,p\right) ^{\rho }=\mathrm{str}%
\left( E_{t}\left( \mathbf{1}^{b}\otimes D_{\Sigma }\right) ^{\rho }\right)
\left( p,p\right) $ is the local heat supertrace corresponding to the
operator $\mathbf{1}^{b}\otimes D_{\Sigma }$ on $\Gamma \left(
U,W^{b}\otimes E_{\Sigma }\right) ^{\rho }$. The eta invariant $\eta \left(
D^{S+,b}\right) $ is the equivariant eta invariant of $D^{S+}$ restricted to
isotropy representation types present in $W^{b}$; since the eigenvalues of $%
D^{S+}$ are integers, this quantity is constant over components of the
stratum relative to $G$. Similarly, the dimension $h\left( D^{S+,b}\right) $
of the kernel of $D^{S+}$ restricted to those sections is locally constant.

\subsection*{5. The Equivariant Index Theorem}

%\section{The Equivariant Index Theorem\label{EquivariantIndexSection}}

To evaluate $\mathrm{ind}^{\rho }\left( D\right) $ as in Equation (\ref%
{heatKernelExpressionForIndex}), we apply formula (\ref{heatSplitting})
repeatedly, starting with a minimal stratum and then applying to each double
of the equivariant desingularization. After all the strata are blown up and
doubled, all of the resulting manifolds have a single stratum, and the $G$%
-manifold is a fiber bundle with homogeneous fibers. We obtain the following
result. In what follows, if $U$ denotes an open subset of a stratum of the
action of $G$ on $M$, $U^{\prime }$ denotes the equivariant
desingularization of $U$, and $\widetilde{U}$ denotes the fundamental domain
of $U$ inside $U^{\prime }$, as in Section 4. 
%\ref{BlowupDoubleMainSection}. 
We also refer the reader to Definitions \ref{componentRelGDefn} and \ref%
{canonicalIsotropyBundleDefinition}. For the sake of simplicity of
exposition, we assume that the codimension of each stratum is at least two.

\begin{theorem}
(Equivariant Index Theorem) \label{MainTheorem}Let $M_{0}$ be the principal
stratum of the action of a compact Lie group $G$ on the closed Riemannian $M$%
, and let $\Sigma _{\alpha _{1}}$,...,$\Sigma _{\alpha _{r}}$ denote all the
components of all singular strata relative to $G$. Let $E\rightarrow M$ be a
Hermitian vector bundle on which $G$ acts by isometries. Let $D:\Gamma
\left( M,E^{+}\right) \rightarrow \Gamma \left( M,E^{-}\right) $ be a first
order, transversally elliptic, $G$-equivariant differential operator. We
assume that near each $\Sigma _{\alpha _{j}}$, $D$ is $G$-homotopic to the
product $D_{N}\ast D^{\alpha _{j}}$, where $D_{N}$ is a $G$-equivariant,
first order differential operator on $B_{\varepsilon }\Sigma $ that is
elliptic and has constant coefficients on the fibers and $D^{\alpha _{j}}\ $%
is a global transversally elliptic, $G$-equivariant, first order operator on
the $\Sigma _{\alpha _{j}}$. In polar coordinates 
\begin{equation*}
D_{N}=Z_{j}\left( \nabla _{\partial _{r}}^{E}+\frac{1}{r}D_{j}^{S}\right) ~,
\end{equation*}%
where $r$ is the distance from $\Sigma _{\alpha _{j}}$, where $Z_{j}$ is a
local bundle isometry (dependent on the spherical parameter), the map $%
D_{j}^{S}$ is a family of purely first order operators that differentiates
in directions tangent to the unit normal bundle of $\Sigma _{j}$. Then the
equivariant index $\mathrm{ind}^{\rho }\left( D\right) $ is given by the
formula 
\begin{eqnarray*}
\mathrm{ind}^{\rho }\left( D\right)  &=&\int_{G\diagdown \widetilde{M_{0}}%
}A_{0}^{\rho }\left( x\right) ~\widetilde{\left\vert dx\right\vert }%
~+\sum_{j=1}^{r}\beta \left( \Sigma _{\alpha _{j}}\right) ~, \\
\beta \left( \Sigma _{\alpha _{j}}\right)  &=&\frac{1}{2\dim V_{\rho }}%
\sum_{b}\frac{1}{n_{b}\mathrm{rank~}W^{b}}\left( -\eta \left(
D_{j}^{S+,b}\right) +h\left( D_{j}^{S+,b}\right) \right) \int_{G\diagdown 
\widetilde{\Sigma _{\alpha _{j}}}}A_{j,b}^{\rho }\left( x\right) ~\widetilde{%
\left\vert dx\right\vert }~,
\end{eqnarray*}%
where the sum is over all canonical isotropy bundles $W^{b}$, a finite
number of which yield nonzero $A_{j,b}^{\rho }$, and where

\begin{enumerate}
\item $A_{0}^{\rho }\left( x\right) $ is the Atiyah-Singer integrand, the
local supertrace of the ordinary heat kernel associated to the elliptic
operator induced from $D^{\prime }$ (blown-up and doubled from $D$) on the
quotient $M_{0}^{\prime }\diagup G$, where the bundle $E$ is replaced by the
finite-dimensional space of sections of type $\rho $ over an orbit.

\item Similarly, $A_{i,b}^{\rho }$ is the local supertrace of the ordinary
heat kernel associated to the elliptic operator induced from $\left( \mathbf{%
1}\otimes D^{\alpha _{j}}\right) ^{\prime }$ (blown-up and doubled from $%
\mathbf{1}\otimes D^{\alpha _{j}}$, the twist of $D^{\alpha _{j}}$ by the
canonical isotropy bundle $W^{b}\rightarrow \Sigma _{\alpha _{j}}$ ) on the
quotient $\Sigma _{\alpha _{j}}^{\prime }\diagup G$, where the bundle is
replaced by the space of sections of type $\rho $ over each orbit.

\item $\eta \left( D_{j}^{S+,b}\right) $ is the eta invariant of the
operator $D_{j}^{S+}$ induced on any unit normal sphere $S_{x}\Sigma
_{\alpha _{j}}$, restricted to sections of isotropy representation types in $%
W_{x}^{b}$, which is constant on $\Sigma _{\alpha _{j}}$.

\item $h\left( D_{j}^{S+,b}\right) $ is the dimension of the kernel of $%
D_{j}^{S+,b}$, restricted to sections of isotropy representation types in $%
W_{x}^{b}$, again constant on on $\Sigma _{\alpha _{j}}$.

\item $n_{b}$ is the number of different inequivalent $G_{x}$-representation
types present in each $W_{x}^{b}$, $x\in \Sigma _{\alpha _{j}}$.
\end{enumerate}
\end{theorem}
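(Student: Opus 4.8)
The plan is to reduce the global statement to the local heat-kernel splitting formula \eqref{heatSplitting} and then iterate it over the stratification. First I would start from the heat-kernel expression \eqref{heatKernelExpressionForIndex} for $\mathrm{ind}^{\rho}(D)$ and invoke the locality principle from Section 3: the $t\to 0^{+}$ asymptotics of $\int_{G}\mathrm{str}\,(g\cdot K(t,g^{-1}x,x))\,\overline{\chi_{\rho}(g)}\,dg$ are determined on arbitrarily small saturated neighborhoods, so $\mathrm{ind}^{\rho}(D)$ decomposes as a sum of contributions indexed by the strata $M_{0},M_{1},\dots$. Because the contribution away from a tubular neighborhood $B_{\varepsilon}(\Sigma)$ of a minimal stratum is exponentially small in the directions transverse to the diagonal that leave $T_{\varepsilon}$, the only pieces that survive are the $t^{0}$-coefficient over $\widetilde{M_{0}}/G$ (giving $\int_{G\backslash\widetilde{M_{0}}}A_{0}^{\rho}$ by the ordinary Atiyah-Singer local index theorem applied to the blown-up, doubled elliptic operator $D'$ on the quotient, with $E$ replaced by the type-$\rho$ sections over an orbit) and the correction terms produced by each blow-up.

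Next I would carry out the inductive step. Let $\Sigma=\Sigma_{\alpha_{j}}$ be a minimal stratum of the current $G$-manifold $N^{k}$. Using the product assumption $D^{+}=(D_{N}\ast D_{\Sigma})^{+}$ near $\Sigma$ and the fine decomposition $\left.E\right|_{\Sigma_{\alpha}}=\bigoplus_{b}E^{b}$ from Definition \ref{FineDecompositionDefinition}, I would apply the splitting formula \eqref{heatSplitting}: the integral of $\mathrm{str}\,K(t,\cdot,\cdot)^{\rho}$ over $B_{\varepsilon}(U)$ equals the corresponding integral over the desingularized $\widetilde{B_{\varepsilon}(U)}$, plus $\sum_{b}\frac{1}{2n_{b}\,\mathrm{rank}(W^{b})}\bigl(-\eta(D^{S+,b})+h(D^{S+,b})\bigr)\int_{p\in U}\mathrm{str}\,K_{\Sigma}^{b}(t,p,p)^{\rho}$, where the eta term comes from the cylindrical-end boundary value problem created by the radial blow-up and the constancy of $\eta$ and $h$ on components relative to $G$ follows from the integrality of the eigenvalues of $D^{S+}$. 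Crucially, each canonical isotropy bundle $W^{b}$ adapted to $E^{b}$ (Lemma \ref{canIsotropyGbundleAdaptedExists}) allows the twisted operator $\mathbf{1}^{b}\otimes D_{\Sigma}$ to be identified up to $G$-homotopy, so that the limit of $\int_{p\in U}\mathrm{str}\,K_{\Sigma}^{b}(t,p,p)^{\rho}$ as $t\to 0^{+}$ is the Atiyah-Singer integrand $A_{j,b}^{\rho}$ integrated over $G\backslash\widetilde{\Sigma_{\alpha_{j}}}$ once $\Sigma$ itself is (inductively) desingularized.

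I would then assemble the terms. Since $\widetilde{B_{\varepsilon}(U)}$ has one fewer orbit type than $B_{\varepsilon}(U)$, iterating \eqref{heatSplitting} over the finitely many strata $\Sigma_{\alpha_{1}},\dots,\Sigma_{\alpha_{r}}$ (each blow-up being applied also to the doubled manifolds created by previous blow-ups, which by the invertible-double construction contribute no new index) terminates with a $G$-manifold of a single orbit type, whose quotient is a smooth closed manifold. The residual principal-stratum term contributes $\int_{G\backslash\widetilde{M_{0}}}A_{0}^{\rho}\,\widetilde{\lvert dx\rvert}$, and summing the correction terms from all $r$ blow-ups gives $\sum_{j=1}^{r}\beta(\Sigma_{\alpha_{j}})$ with $\beta(\Sigma_{\alpha_{j}})$ exactly as stated — the factor $\frac{1}{\dim V_{\rho}}$ entering through the normalization $\mathrm{ind}^{\rho}(D)=\frac{1}{\dim V_{\rho}}\,\mathrm{ind}(D|_{\Gamma(M,E^{+})^{\rho}})$ from the introduction. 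Finiteness of the sum over $b$ follows because only finitely many canonical isotropy bundles occur in $\left.E\right|_{\Sigma_{\alpha_{j}}}$ and only those produce nonzero $A_{j,b}^{\rho}$.

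The main obstacle is establishing the local splitting formula \eqref{heatSplitting} itself — in particular, showing that the difference between the equivariant heat supertrace on the cone and on the cylinder is precisely $\tfrac12(-\eta+h)$ times the lower-dimensional supertrace, summed correctly over fine components with the weights $\frac{1}{n_{b}\,\mathrm{rank}(W^{b})}$. This requires a careful analysis of the model operator $D_{N}=Z(\nabla_{\partial_{r}}^{E}+\tfrac1r D^{S})$ on the normal bundle, an APS-type boundary value problem on the cylinder, and the bookkeeping that matches $G_{x}$-isotypical multiplicities on the sphere $S_{x}\Sigma$ with the adapted bundles $W^{b}$; this is precisely the content deferred to \cite{BKR1}, and keeping track of the $G$-equivariant homotopies so that the final integrands still reflect the original data on $M$ is the delicate part.
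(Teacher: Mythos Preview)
Your proposal is correct and follows essentially the same strategy as the paper: start from the heat-kernel expression \eqref{heatKernelExpressionForIndex}, apply the local splitting formula \eqref{heatSplitting} repeatedly beginning with a minimal stratum and continuing through the successive desingularizations until a single orbit type remains, and then read off the stated formula from the accumulated boundary corrections and the Atiyah--Singer integrand on the quotient. You have also correctly identified that the substantive analytic work lies in proving \eqref{heatSplitting} itself (the cone-to-cylinder comparison producing the $\tfrac{1}{2}(-\eta+h)$ term with the fine-component weights), which the paper likewise defers to \cite{BKR1}.
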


We now give a simple application of our result. It is well known that if $M$
is a Riemannian manifold and $f:M\rightarrow M$ is an isometry that is
homotopic to the identity, then the Euler characteristic of $M$ is the sum
of the Euler characteristics of the fixed point sets of $f$. We generalize
this result as follows. We consider the de Rham operator 
\begin{equation*}
d+d^{\ast }:\Omega ^{\mathrm{even}}\left( M\right) \rightarrow \Omega ^{%
\mathrm{odd}}\left( M\right)
\end{equation*}%
on a $G$-manifold, and the invariant index of this operator is the
equivariant Euler characteristic $\chi ^{G}\left( M\right) $, the Euler
characteristic of the elliptic complex consisting of invariant forms. If $G$
is connected and the Euler characteristic is expressed in terms of its $\rho 
$-components, only the invariant part $\chi ^{G}\left( M\right) =\chi ^{\rho
_{0}}\left( M\right) $ appears. This is a consequence of the homotopy
invariance of de Rham cohomology. Thus $\chi ^{G}\left( M\right) =\chi
\left( M\right) $ for connected Lie groups $G$. In general the Euler
characteristic is a sum of components%
\begin{equation*}
\chi \left( M\right) =\sum_{\left[ \rho \right] }\chi ^{\rho }\left(
M\right) ,
\end{equation*}%
where $\chi ^{\rho }\left( M\right) $ is the alternating sum of the
dimensions of the $\left[ \rho \right] $-parts of the cohomology groups (or
spaces of harmonic forms). Since the connected component $G_{0}$ of the
identity in $G$ acts trivially on the harmonic forms, the only nontrivial
components $\chi ^{\rho }\left( M\right) $ correspond to representations
induced from unitary representations of the finite group $G\diagup G_{0}$.

Using Theorem \ref{MainTheorem} and the formula $\mathrm{ind}^{\rho}\left(
d+d^{\ast }\right) = \frac{1}{\dim V_{\rho }}\chi ^{\rho }\left(M\right) $,
we obtain the following result.

\begin{theorem}
\label{EulerCharacteristicTheorem}Let $M$ be a compact $G$-manifold, with $G$
a compact Lie group and principal isotropy subgroup $H_{\mathrm{pr}}$. Let $%
M_{0}$ denote the principal stratum, and let $\Sigma _{\alpha _{1}}$,...,$%
\Sigma _{\alpha _{r}}$ denote all the components of all singular strata
relative to $G$. Then 
\begin{eqnarray*}
\chi ^{\rho }\left( M\right) &=&\chi ^{\rho }\left( G\diagup H_{\mathrm{pr}%
}\right) \chi \left( G\diagdown M,G\diagdown \text{singular strata}\right) \\
&&+\sum_{j}\chi ^{\rho }\left( G\diagup G_{j}\text{~},\mathcal{L}%
_{N_{j}}\right) \chi \left( G\diagdown \overline{\Sigma _{\alpha _{j}}}%
,G\diagdown \text{lower strata}\right) ,
\end{eqnarray*}%
where $\mathcal{L}_{N_{j}}$ is the orientation line bundle of normal bundle
of the stratum component $\Sigma _{\alpha _{j}}$.
\end{theorem}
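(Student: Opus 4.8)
The plan is to feed the de Rham operator $D = d + d^{\ast}\colon \Omega^{\mathrm{even}}(M) \to \Omega^{\mathrm{odd}}(M)$ into Theorem~\ref{MainTheorem} and to recognize each term it produces as a topological Euler characteristic. Using the identity $\mathrm{ind}^{\rho}(d+d^{\ast}) = \frac{1}{\dim V_{\rho}}\chi^{\rho}(M)$, the index theorem becomes
\[
\chi^{\rho}(M) = \dim V_{\rho}\int_{G\diagdown\widetilde{M_{0}}}A_{0}^{\rho}(x)\,\widetilde{\left\vert dx\right\vert } \;+\; \dim V_{\rho}\sum_{j=1}^{r}\beta(\Sigma_{\alpha_{j}}).
\]
First I would verify the product hypothesis and identify the operators: on a Riemannian product the exterior derivative splits as a sum of a fibre and a base derivative, so near each stratum the de Rham operator is $G$-homotopic to $D_{N}\ast D^{\alpha_{j}}$ with $D^{\alpha_{j}} = d+d^{\ast}$ the de Rham operator of $\Sigma_{\alpha_{j}}$ and $D_{N}$ the fibrewise de Rham operator of the normal bundle; then the tangential operator $D_{j}^{S}$ on a normal sphere $S_{x}\Sigma_{\alpha_{j}} = S^{n_{j}-1}$ is, up to $G$-homotopy, the self-adjoint de Rham operator $d^{S}+d^{S\ast}$ on all forms of that sphere, and $\mathbf{1}^{b}\otimes D^{\alpha_{j}}$ is the de Rham operator of $\Sigma_{\alpha_{j}}$ coupled to the canonical isotropy bundle $W^{b}$.

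Next I would dispose of the spectral factor $-\eta(D_{j}^{S+,b})+h(D_{j}^{S+,b})$. Because $d^{S}+d^{S\ast}$ shifts the form degree by $\pm 1$, it anticommutes with the parity involution $(-1)^{\deg}$; hence its spectrum is symmetric about $0$ and $\eta(D_{j}^{S+,b}) = 0$, equivariantly and on every fine component. Only $h(D_{j}^{S+,b})$ survives, and this is the dimension of the space of harmonic forms on $S^{n_{j}-1}$ whose $G_{x}$-isotropy type occurs in $W_{x}^{b}$, i.e.\ the $W^{b}$-part of $H^{\ast}(S^{n_{j}-1})$. Since $H^{\ast}(S^{n_{j}-1})$ is the trivial $G_{x}$-representation in degree $0$ together with, when $n_{j}-1$ is even, the top class, which transforms by the orientation character of $N_{j,x}$, the sum of $h(D_{j}^{S+,b})/(n_{b}\,\mathrm{rank}\,W^{b})$ against the $\rho$-multiplicities of the $W^{b}$ over the model orbit $G/G_{j}$ repackages, via the fine-decomposition bookkeeping of Section~2, exactly the $\rho$-equivariant cohomology of $G/G_{j}$ with coefficients in the orientation line bundle $\mathcal{L}_{N_{j}}$, producing the factor $\chi^{\rho}(G/G_{j},\mathcal{L}_{N_{j}})$.

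The remaining ingredients are the Atiyah--Singer integrands. For the de Rham (Gauss--Bonnet) operator the local index density is the Pfaffian form of the quotient manifold times the rank of the (here $\mathbb{Z}/2$-graded) coefficient space, so, since $G$ acts on $M_{0}\to G\diagdown M_{0}$ only along the fibre $G/H_{\mathrm{pr}}$ and on $\Sigma_{\alpha_{j}}\to G\diagdown\Sigma_{\alpha_{j}}$ only along $G/G_{j}$, the K\"unneth theorem and the Leray spectral sequence give the multiplicativity $\chi^{\rho}(\text{total}) = \chi^{\rho}(\text{fibre})\cdot\chi(\text{base})$ in the relevant $\rho$-refined and twisted versions. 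Thus $\dim V_{\rho}\int_{G\diagdown\widetilde{M_{0}}}A_{0}^{\rho}$ is $\chi^{\rho}(G/H_{\mathrm{pr}})$ times a Chern--Gauss--Bonnet integral over $G\diagdown\widetilde{M_{0}}$, and each $\int_{G\diagdown\widetilde{\Sigma_{\alpha_{j}}}}A_{j,b}^{\rho}$ is a Chern--Gauss--Bonnet integral over $G\diagdown\widetilde{\Sigma_{\alpha_{j}}}$. I would finish by induction on the number of orbit types: the base case $M = M_{0}$ is exactly the multiplicativity above with empty singular set, and each blow-up-and-double step of Section~4 creates a boundary whose transgression contribution (Chern--Gauss--Bonnet with corners) is precisely $\dim V_{\rho}\,\beta(\Sigma_{\alpha_{j}})$, so that the Gauss--Bonnet integrals over the desingularized quotients telescope into the \emph{relative} Euler characteristics $\chi(G\diagdown M,\,G\diagdown\text{singular strata})$ and $\chi(G\diagdown\overline{\Sigma_{\alpha_{j}}},\,G\diagdown\text{lower strata})$.

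I expect the two delicate points to be, first, the precise extraction of $\mathcal{L}_{N_{j}}$: one must check that the $G_{x}$-action on the top cohomology of the normal sphere is genuinely the orientation character of $N_{j,x}$, and that the weights $1/(n_{b}\,\mathrm{rank}\,W^{b})$ summed over fine components $b$ convert these one-dimensional isotropy contributions into the \emph{twisted} equivariant Euler characteristic $\chi^{\rho}(G/G_{j},\mathcal{L}_{N_{j}})$ rather than the untwisted one; and second, the stratified bookkeeping, i.e.\ organizing the iterated desingularization so that each newly created boundary is matched with the correct stratum's $\beta$-term, nothing is double counted among strata of incomparable orbit types, and the transgression integrals assemble into the stated relative Euler characteristics. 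The rest reduces to classical facts: homotopy invariance of $\chi^{\rho}$, vanishing of the de Rham eta invariant, and multiplicativity of the Euler characteristic in fibre bundles.
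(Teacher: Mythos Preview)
Your proposal is correct and follows essentially the same approach as the paper: apply Theorem~\ref{MainTheorem} to the de Rham operator together with the identity $\mathrm{ind}^{\rho}(d+d^{\ast}) = \frac{1}{\dim V_{\rho}}\chi^{\rho}(M)$. The paper, being an announcement, gives no further detail (the full argument is deferred to \cite{BKR1}), while you have sketched the key identifications --- the vanishing of the spherical eta invariant by spectral symmetry, the appearance of $\mathcal{L}_{N_{j}}$ from the $G_{x}$-action on the top cohomology of the normal sphere, and the collapse of the Atiyah--Singer integrands to relative Euler characteristics via Chern--Gauss--Bonnet on the desingularized quotients --- that the cited reference must carry out; the two delicate points you flag are exactly where the work lies.
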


In the formula above, the $\chi \left( X,Y\right) $ refers to the relative
Euler characteristic (see \cite{BKR1} for details and examples).

\subsection*{6. The Basic Index Theorem for Riemannian foliations}

%\section{The Basic Index Theorem for Riemannian foliations
%\label{BasicIndexSubsection}}

\medskip The content of this section is discussed and proved in detail in 
\cite{BKR2}. Let $M$ be an $n$-dimensional, closed, connected manifold, and
let $\mathcal{F}$ be a codimension $q$ foliation on $M$. Let $Q$ denote the
quotient bundle $TM\diagup T\mathcal{F}$ over $M$. Such a foliation is
called a \emph{Riemannian foliation} if it is endowed with a metric on $Q$
(called the transverse metric) that is \emph{holonomy-invariant}; that is,
the Lie derivative of that transverse metric with respect to every leafwise
tangent vector is zero. The metric on $Q$ can always be extended to a
Riemannian metric on $M$; the extended metric restricted to the normal
bundle $N\mathcal{F}=\left( T\mathcal{F}\right) ^{\bot }$ agrees with the
transverse metric via the isomorphism $Q\cong N\mathcal{F}$. We refer the
reader to \cite{Mo}, \cite{Re}, and \cite{T} for introductions to the
geometric and analytic properties of Riemannian foliations.

Let $\widehat{M}$ be the transverse orthonormal frame bundle of $(M,\mathcal{%
F})$, and let $p$ be the natural projection $p:\widehat{M}\rightarrow M$.
The Bott connection is a natural connection on $Q$ that induces a connection
on $\widehat{M}$ (see \cite[pp. 80ff]{Mo} ). The manifold $\widehat{M}$ is a
principal $O(q)$-bundle over $M$. Given $\hat{x}\in \widehat{M}$, let $\hat{x%
}g$ denote the well-defined right action of $g\in G=O(q)$ applied to $\hat{x}
$. Associated to $\mathcal{F}$ is the lifted foliation $\widehat{\mathcal{F}}
$ on $\widehat{M}$; the distribution $T\widehat{\mathcal{F}}$ is the
horizontal lift of $T\mathcal{F}$. By the results of Molino (see \cite[%
pp.~105-108, p.~147ff]{Mo} ), the lifted foliation is transversally
parallelizable (meaning that there exists a global basis of the normal
bundle consisting of vector fields whose flows preserve $\widehat{\mathcal{F}%
}$), and the closures of the leaves are fibers of a fiber bundle $\widehat{%
\pi }:\widehat{M}\rightarrow \widehat{W}$. The manifold $\widehat{W}$ is
smooth and is called the basic manifold. Let $\overline{\widehat{\mathcal{F}}%
}$ denote the foliation of $\widehat{M}$ by leaf closures of $\widehat{%
\mathcal{\ F}}$, which is shown by Molino to be a fiber bundle. The leaf
closure space of $\left( M,\mathcal{F}\right) $ is denoted $W=M\diagup 
\overline{\mathcal{F}}=\widehat{W}\diagup G$. 
\begin{equation*}
\begin{array}{ccccccc}
p^{\ast }E &  &  &  & \mathcal{E} &  &  \\ 
& \searrow  &  &  & \downarrow  &  &  \\ 
O\left( q\right)  & \hookrightarrow  & \left( \widehat{M},\widehat{\mathcal{F%
}}\right)  & \overset{\widehat{\pi }}{\longrightarrow } & \widehat{W} &  & 
\\ 
&  & \downarrow ^{p} & \circlearrowleft  & \downarrow \,\, &  &  \\ 
E & \rightarrow  & \left( M,\mathcal{F}\right)  & \overset{\pi }{%
\longrightarrow } & W &  & 
\end{array}%
\end{equation*}

Endow $(\widehat{M},\widehat{\mathcal{F}})$ with the transverse metric $%
g^{Q}\oplus g^{O(q)}$, where $g^{Q}$ is the pullback of metric on $Q$, and $%
g^{O(q)}$ is the standard, normalized, biinvariant metric on the fibers. We
require that vertical vectors are orthogonal to horizontal vectors. This
transverse metric gives each of $(\widehat{M},\widehat{\mathcal{F}})$ and $(%
\widehat{M},\overline{\widehat{\mathcal{F}}})$ the structure of a Riemannian
foliation. The transverse metric on $(\widehat{M},\overline{\widehat{%
\mathcal{F}}})$ induces a well--defined Riemannian metric on $\widehat{W}$.
The action of $G=O(q)$ on $\widehat{M}$ induces an isometric action on $%
\widehat{W}$.

For each leaf closure $\overline{\widehat{L}}\in \overline{\widehat{\mathcal{%
F}}}$ and $\widehat{x}\in \overline{\widehat{L}}$, the restricted map $p:%
\overline{\widehat{L}}\rightarrow \overline{L}$ is a principal bundle with
fiber isomorphic to a subgroup $H_{\widehat{x}}\subset O(q)$, which is the
isotropy subgroup at the point $\widehat{\pi }(\hat{x})\in \widehat{W}$. The
conjugacy class of this group is an invariant of the leaf closure $\overline{%
L}$, and the strata of the group action on $\widehat{W}$ correspond to the
strata of the leaf closures of $(M,\mathcal{F})$.

A \emph{basic form} over a foliation is a global differential form that is
locally the pullback of a form on the leaf space; more precisely, $\alpha
\in \Omega ^{\ast }\left( M\right) $ is basic if for any vector tangent to
the foliation, the interior product with both $\alpha $ and $d\alpha $ is
zero. A\ \emph{basic vector field} is a vector field $V$ whose flow
preserves the foliation. In a Riemannian foliation, near any point it is
possible to choose a local orthonormal frame of $Q$ represented by basic
vector fields.

A vector bundle $E\rightarrow \left( M,\mathcal{F}\right) $ that is \emph{%
foliated} may be endowed with a basic connection $\nabla ^{E}$ (one for
which the associated curvature forms are basic -- see \cite{KT2}). An
example of such a bundle is the normal bundle $Q$. Given such a foliated
bundle, a section $s\in \Gamma \left( E\right) $ is called a \emph{basic
section} if for every $X\in T\mathcal{F}$, $\nabla _{X}^{E}s=0$. Let $\Gamma
_{b}\left( E\right) $ denote the space of basic sections of $E$. Note that
the basic sections of $Q$ correspond to basic normal vector fields.

An example of another foliated bundle over a component of a stratum $M_{j}$
is the bundle defined as follows. Let $E\rightarrow M$ be any foliated
vector bundle. Let $\Sigma _{\alpha _{j}}=\widehat{\pi }\left( p^{-1}\left(
M_{j}\right) \right) $ be the corresponding stratum on the basic manifold $%
\widehat{W}$, and let $W^{\tau }\rightarrow \Sigma _{\alpha _{j}}$ be a
canonical isotropy bundle (Definition \ref{canonicalIsotropyBundleDefinition}%
). Consider the bundle $\widehat{\pi }^{\ast }W^{\tau }\otimes p^{\ast
}E\rightarrow p^{-1}\left( M_{j}\right) $, which is foliated and basic for
the lifted foliation restricted to $p^{-1}\left( M_{j}\right) $. This
defines a new foliated bundle $E^{\tau }\rightarrow M_{j}$ by letting $%
E_{x}^{\tau }$ be the space of $O\left( q\right) $-invariant sections of $%
\widehat{\pi }^{\ast }W^{\tau }\otimes p^{\ast }E$ restricted to $%
p^{-1}\left( x\right) $. We call this bundle \textbf{the} $W^{\tau }$\textbf{%
-twist of} $E\rightarrow M_{j}$.

Suppose that $E$ is a foliated $\mathbb{C}\mathrm{l}\left( Q\right) $ module
with basic $\mathbb{C}\mathrm{l}\left( Q\right) $ connection $\nabla ^{E}$
over a Riemannian foliation $\left( M,\mathcal{F}\right) $. Then it can be
shown that Clifford multiplication by basic vector fields preserves $\Gamma
_{b}\left( E\right) $, and we have the operator%
\begin{equation*}
D_{b}^{E}:\Gamma _{b}\left( E^{+}\right) \rightarrow \Gamma _{b}\left(
E^{-}\right)
\end{equation*}%
defined for any local orthonormal frame $\left\{ e_{1},...,e_{q}\right\} $
for $Q$ by%
\begin{equation*}
D_{b}^{E}=\left. \sum_{j=1}^{q}c\left( e_{j}\right) \nabla
_{e_{j}}^{E}\right\vert _{\Gamma _{b}\left( E\right) }.
\end{equation*}%
Then $D_{b}^{E}$ can be shown to be well-defined and is called the basic
Dirac operator corresponding to the foliated $\mathbb{C}\mathrm{l}\left(
Q\right) $ module $E$ (see \cite{GlK}). We note that this operator is not
symmetric unless a zero$^{\mathrm{th}}$ order term involving the mean
curvature is added; see \cite{KTFol}, \cite{KT4}, \cite{KT}, \cite{GlK}, 
\cite{PrRi}, \cite{HabRi}, \cite{BKR2} for more information regarding
essential self-adjointness of the modified operator and its spectrum. In the
formulas below, any lower order terms that preserve the basic sections may
be added without changing the index.

\begin{definition}
The \emph{analytic basic index} of $D_{b}^{E}$ is 
\begin{equation*}
\mathrm{ind}_{b}\left( D_{b}^{E}\right) =\dim \ker D_{b}^{E}-\dim \ker
\left( D_{b}^{E}\right) ^{\ast }.
\end{equation*}
\end{definition}

It is well-known that these dimensions are finite (see \cite{EKHS}, \cite{KT}%
, \cite{EK}, \cite{BKR2}), and it is possible to identify $\mathrm{ind}%
_{b}\left( D_{b}^{E}\right) $ with the invariant index of a first order, $%
O\left( q\right) $-equivariant differential operator $\widehat{D}$ over a
vector bundle over the basic manifold $\widehat{W}$. By applying the
equivariant index theorem (Theorem \ref{MainTheorem}), we obtain the
following formula for the index. In what follows, if $U$ denotes an open
subset of a stratum of $\left( M,\mathcal{F}\right) $, $U^{\prime }$ denotes
the desingularization of $U$ very similar to that in Section 4, 
%\ref{BlowupDoubleMainSection}, 
and $\widetilde{U}$ denotes the fundamental domain of $U$ inside $U^{\prime }
$.

\begin{theorem}
(Basic Index Theorem for Riemannian foliations \cite{BKR2}) Let $M_{0}$ be
the principal stratum of the Riemannian foliation $\left( M,\mathcal{F}%
\right) $, and let $M_{1}$, ... , $M_{r}$ denote all the components of all
singular strata, corresponding to $O\left( q\right) $-isotropy types $\left[
G_{1}\right] $, ... ,$\left[ G_{r}\right] $ on the basic manifold. With
notation as in the discussion above, we have 
\begin{eqnarray*}
\mathrm{ind}_{b}\left( D_{b}^{E}\right) &=&\int_{\widetilde{M_{0}}\diagup 
\overline{\mathcal{F}}}A_{0,b}\left( x\right) ~\widetilde{\left\vert
dx\right\vert }+\sum_{j=1}^{r}\beta \left( M_{j}\right) ~ \\
\beta \left( M_{j}\right) &=&\frac{1}{2}\sum_{\tau }\frac{1}{n_{\tau }%
\mathrm{rank~}W^{\tau }}\left( -\eta \left( D_{j}^{S+,\tau }\right) +h\left(
D_{j}^{S+,\tau }\right) \right) \int_{\widetilde{M_{j}}\diagup \overline{%
\mathcal{F}}}A_{j,b}^{\tau }\left( x\right) ~\widetilde{\left\vert
dx\right\vert },
\end{eqnarray*}%
where the sum is over all components of singular strata and over all
canonical isotropy bundles $W^{\tau }$, only a finite number of which yield
nonzero terms $A_{j,b}^{\tau }$, and where

\begin{enumerate}
\item $A_{0,b}\left( x\right) $ is the Atiyah-Singer integrand, the local
supertrace of the ordinary heat kernel associated to the elliptic operator
induced from $\widetilde{D_{b}^{E}}$ (a desingularization of $D_{b}^{E}$) on
the quotient $\widetilde{M_{0}}\diagup \overline{\mathcal{F}}$, where the
bundle $E$ is replaced by the space of basic sections of over each leaf
closure;

\item $\eta \left( D_{j}^{S+,b}\right) $ and $h\left( D_{j}^{S+,b}\right) $
are defined in a similar way as in Theorem \ref{MainTheorem}, using a
decomposition $D_{b}^{E}=D_{N}\ast D_{M_{j}}$ at each singular stratum;

\item $A_{j,b}^{\tau }\left( x\right) $ is the local supertrace of the
ordinary heat kernel associated to the elliptic operator induced from $%
\left( \mathbf{1}\otimes D_{M_{j}}\right) ^{\prime }$ (blown-up and doubled
from $\mathbf{1}\otimes D_{M_{j}}$, the twist of $D_{M_{j}}$ by the
canonical isotropy bundle $W^{\tau }$) on the quotient $\widetilde{M_{j}}%
\diagup \overline{\mathcal{F}}$, where the bundle is replaced by the space
of basic sections over each leaf closure; and

\item $n_{\tau }$ is the number of different inequivalent $G_{j}$%
-representation types present in a typical fiber of $W^{\tau }$.
\end{enumerate}
\end{theorem}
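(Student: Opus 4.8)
The plan is to derive the Basic Index Theorem as a corollary of the Equivariant Index Theorem (Theorem \ref{MainTheorem}) applied on the basic manifold $\widehat{W}$ to the $O(q)$-equivariant operator $\widehat{D}$ that represents $D_b^E$. First I would make precise the identification $\mathrm{ind}_b(D_b^E) = \mathrm{ind}^{\rho_0}(\widehat{D})$. Recall from Molino theory that basic sections of a foliated bundle $E \to (M,\mathcal{F})$ correspond to $O(q)$-invariant sections over $\widehat{W}$ of an associated bundle $\mathcal{E}$: a basic section of $E$ pulls back to an $\widehat{\mathcal{F}}$-basic, $O(q)$-invariant section of $p^\ast E$ on $\widehat{M}$, which descends to an $O(q)$-invariant section of a bundle $\mathcal{E} \to \widehat{W}$ (since $\widehat{\pi}: \widehat{M} \to \widehat{W}$ collapses leaf closures). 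Under this correspondence the basic Dirac operator $D_b^E$, built from Clifford multiplication by basic frames of $Q$ and the basic connection, transforms into a first-order $O(q)$-equivariant transversally elliptic operator $\widehat{D}$ on $\mathcal{E} \to \widehat{W}$; transversal ellipticity holds because the basic/horizontal directions on $\widehat{W}$ are exactly the images of the transverse directions of $\mathcal{F}$, on which $D_b^E$ is elliptic. One must check that the modifications by lower-order mean-curvature terms needed for self-adjointness do not change either index, which is the last sentence of the setup and follows from homotopy invariance as in Section 3.

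Next I would translate each ingredient of Theorem \ref{MainTheorem} (with $\rho = \rho_0$ the trivial representation, $\dim V_{\rho_0} = 1$) into foliation language. The strata of the $O(q)$-action on $\widehat{W}$ are precisely the $\Sigma_{\alpha_j} = \widehat{\pi}(p^{-1}(M_j))$, and the quotients $\widehat{W}/O(q) = M/\overline{\mathcal{F}}$, $\widetilde{\Sigma_{\alpha_j}}/O(q) = \widetilde{M_j}/\overline{\mathcal{F}}$ identify the domains of integration. The canonical isotropy bundles $W^b \to \Sigma_{\alpha_j}$ of Definition \ref{canonicalIsotropyBundleDefinition} become the bundles $W^\tau$, and the twisted operators $\mathbf{1}^b \otimes D^{\alpha_j}$ correspond, under the basic-sections dictionary, to the operators induced by $\mathbf{1}\otimes D_{M_j}$ acting on the $W^\tau$-twist $E^\tau$ of $E$ over $M_j$ as defined in Section 6. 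The integrand $A_0^{\rho_0}$ becomes $A_{0,b}$ — the Atiyah–Singer integrand of the desingularized elliptic operator on $\widetilde{M_0}/\overline{\mathcal{F}}$ with $E$ replaced by basic sections over each leaf closure — and similarly $A_{j,b}^{\rho_0}$ becomes $A_{j,b}^\tau$; the eta and kernel-dimension terms $\eta(D_j^{S+,\tau})$, $h(D_j^{S+,\tau})$ and the multiplicity $n_\tau$ carry over verbatim since they are computed fiberwise on normal spheres from the symbol of $D_N$. Substituting all of this into the formula of Theorem \ref{MainTheorem} and using $\dim V_{\rho_0}=1$ yields exactly the claimed formula.

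The main obstacle is the passage from $(M,\mathcal{F})$ to $(\widehat{W}, O(q))$ being genuinely faithful at the level of the desingularization and the product structure near strata. One must verify that the product decomposition $D_b^E = D_N \ast D_{M_j}$ assumed near each singular stratum of the foliation corresponds, after lifting and descending, to the product hypothesis $\widehat{D} \simeq D_N \ast D^{\alpha_j}$ required by Theorem \ref{MainTheorem}, including that the radial/normal operator $D_N$ and the conical-to-cylindrical blow-up commute with taking $O(q)$-invariants and with the identification $Q \cong N\mathcal{F}$; this is where the transverse metric $g^Q \oplus g^{O(q)}$ on $\widehat{M}$ and the requirement that horizontal and vertical vectors be orthogonal are used crucially. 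A secondary technical point is matching the Riemannian densities: $\widetilde{|dx|}$ on $\widehat{W}/O(q)$ versus on $M/\overline{\mathcal{F}}$ — these agree because the transverse metric on $(\widehat{M},\overline{\widehat{\mathcal{F}}})$ descends to the metric on $\widehat{W}$ which in turn is compatible with the leaf-closure metric on $M/\overline{\mathcal{F}}$. Once these identifications are in place, the theorem follows formally; the detailed verification is carried out in \cite{BKR2}.
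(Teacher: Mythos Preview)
Your proposal is correct and follows essentially the same route as the paper: identify $\mathrm{ind}_b(D_b^E)$ with the invariant index $\mathrm{ind}^{\rho_0}(\widehat{D})$ of an $O(q)$-equivariant transversally elliptic operator on the basic manifold $\widehat{W}$, then apply Theorem~\ref{MainTheorem} and translate each term (strata, canonical isotropy bundles, integrands, $\eta$- and $h$-terms, densities) via the Molino dictionary. The technical points you flag---compatibility of the product decomposition $D_N\ast D_{M_j}$ with the lift/descent, and matching of densities under $\widehat{W}/O(q)\cong M/\overline{\mathcal{F}}$---are precisely the verifications deferred to \cite{BKR2}.
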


\end{document}